\definecolor{darkgreen}{rgb}{0,0.5,0}
\def\arr#1#2{\arrow <2mm> [0.25,0.75] from #1 to #2}
\def\ssize{\scriptstyle}
\DeclareMathOperator\rad{{\rm rad}}
\DeclareMathOperator\Hom{{\rm Hom}}
\DeclareMathOperator\Ker{{\rm Ker}}
\renewcommand\Im{{\rm Im}}
\renewcommand\mod{{\rm mod}}
\newcommand\onto{\twoheadrightarrow}
\newcommand\bd{{\mathbf d}}
\newcommand\be{{\mathbf e}}
\newcommand\bh{{\mathbf h}}
\newtheoremstyle{mytheorems}{9pt}{6pt}{\itshape}{0pt}{\sc}{.}{ }{}
\newtheoremstyle{myremarks}{6pt}{3pt}{\normalfont}{0pt}{\it}{.}{ }{}
\theoremstyle{mytheorems}
\newtheorem{theorem}{Theorem}[section]
\newtheorem{lemma}[theorem]{Lemma}
\newtheorem{corollary}[theorem]{Corollary}
\newtheorem{proposition}[theorem]{Proposition}
\theoremstyle{myremarks}
\begin{document}
%
%{\tt\footnotesize[\ver, \today]}
%\bigskip\bigskip
%
\title[Linear Operators with Two Invariant Subspaces]{The Swiss Cheese
       Theorem for Linear Operators with Two Invariant Subspaces}
\keywords{linear operators, invariant subspaces, No-Gap Theorem, tubular algebras}
\subjclass{16G20, % representations of quivers
47A15} % invariant subspaces}
\author{Audrey Moore}
% \address{Department of Mathematical Sciences\\ Delaware State University\\ 
%   1200 N DuPont Highway\\ Dover, DE 19901}
\email{audreydoughty@yahoo.com}
\author{Markus Schmidmeier}
\thanks{This research is partially supported 
by a~Travel and Collaboration Grant from the Simons Foundation (Grant number
245848 to the second named author).}
% \address{Department of Mathematical Sciences\\ Florida Atlantic University\\ 
%   777 Glades Road\\ Boca Raton, FL 33431}
\email{markus@math.fau.edu}

\begin{abstract}\sloppy
We study systems  $(V,T,U_1,U_2)$ consisting of a 
  finite dimensional vector space $V$, a nilpotent
  $k$-linear operator $T:V\to V$ and two $T$-invariant
  subspaces $U_1\subset U_2\subset V$.
  Let $\mathcal S(n)$ be the category of such systems where the operator $T$ acts
  with nilpotency index at most $n$.
  We determine the dimension types $(\dim U_1, \dim U_2/U_1, \dim V/U_2)$ 
  of indecomposable systems in $\mathcal S(n)$ for $n\leq 4$. 
  It turns out that in the case where $n=4$ there are infinitely
  many such triples $(x,y,z)$, they all lie in the 
  cylinder given by $|x-y|,|y-z|,|z-x|\leq 4$.
  But not each dimension type in the cylinder can be realized by an indecomposable system.
  In particular, there are holes in the cylinder.  Namely,
  no triple in $(x,y,z)\in (3,1,3)+\mathbb N(2,2,2)$ can be realized, 
  while each neighbor
  $(x\pm1,y,z), (x,y\pm1,z),(x,y,z\pm1)$ can.
  Compare this with Bongartz' No-Gap Theorem, which states that for an associative algebra $A$
  over an algebraically closed field, there is no gap in the lengths of the indecomposable
  $A$-modules of finite dimension.
\end{abstract}

\maketitle

\section{Introduction}
%---------------------

Let $k$ be a field.
We are interested in all possible configurations consisting of 
a finite dimensional $k$-vector space $V$, a linear operator  $T$ acting on $V$,
and a pair of subspaces $U_1\subset U_2$ of $V$ which are invariant under the
action of $T$.
We call such configurations $(V,T,U_1, U_2)$ {\it systems} or {\it quadruples,} 
they form the objects
of a category $\mathcal S$; the morphisms in $\mathcal S$ 
from $(V,T,U_1,U_2)$ to $(V',T',U_1',U_2')$
are those linear maps $f:V\to V'$ which commute with the action of the operator
in the sense that $fT'=Tf$ holds, and which preserve the subspaces, that is,
$f(U_1)\subset U_1'$ and $f(U_2)\subset U_2'$.

\subsection{The categories $\mathcal S(n)$}
%------------------------------------------

The category $\mathcal S$ is additive and 
has the Krull-Remak-Schmidt property, so every system has 
a unique direct sum decomposition into indecomposable ones.  
Moreover, $\mathcal S$ carries the exact structure given 
by sequences of systems and homomorphisms which give rise to three
short exact sequences of vector spaces, namely of the ambient spaces,
the small, and the intermediate subspaces.  There are three simple objects,
$$E_1=(k,0,k,k),\quad E_2=(k,0,0,k), \quad E_3=(k,0,0,0),$$
in each the ambient space $V$ is one-dimensional, and either both,
one, or none of the subspaces are equal to the ambient space.
The {\it dimension type} of a system $M=(V,T,U_1,U_2)$ 
is the triple $$(x,y,z)=(\dim U_1, \dim U_2/U_1, \dim V/U_2)$$
which consists of the multiplicities of $E_1$, $E_2$, $E_3$
as composition factors of $M$, respectively.

\medskip
For $n$ a natural number, let $\mathcal S(n)$ 
be the full subcategory of $\mathcal S$ of all
systems $(V,T,U_1,U_2)$ where the operator $T:V\to V$ 
acts with nilpotency index at most $n$, so
$T^n=0$ holds. 
Categories of type $\mathcal S(n)$ have been studied in the first
author's doctoral dissertation \cite{diss}.  
In this paper, we are interested in the dimension types 
of the indecomposable objects in $\mathcal S(n)$.

\subsection{The hexagonal picture}
%---------------------------------

For each of the categories  $\mathcal S(n)$ 
where $n\leq 4$ we will determine all dimension types of 
indecomposable systems.  The representation finite cases 
are dealt with in Section~\ref{section-finite};
the categories 
$\mathcal S(1)$, $\mathcal S(2)$ and $\mathcal S(3)$
have 3, 9, 27 indecomposable objects, respectively.  
By contrast, the classification of indecomposable systems 
in $\mathcal S(n)$ for $n>4$ is considered an infeasible problem.
So the category $\mathcal S(4)$ is of particular interest as
a borderline case.  
Our main result is the description of the possible dimension types 
for the category $\mathcal S(4)$:

\begin{theorem}\label{theorem-dimension-triples}
A triple $(x,y,z)\neq 0$ of non-negative integers is the dimension type
of an indecomposable system in $\mathcal S(4)$ 
if and only if either all coordinate differences
$|x-y|$, $|y-z|$, $|z-x|$ are all at most 3 and 
$$(x,y,z)\notin \{(1,3,1),(1,4,1),(3,1,3)\}\quad+\mathbb N(2,2,2)$$
or else
\begin{align*} (x,y,z) \in \{ & (4,0,0),(0,4,0),(0,0,4), \\
           &        (4,2,0),(2,4,0),(4,0,2),(2,0,4),(0,4,2),(0,2,4),\\
           &        (5,4,1),(5,2,1),(1,4,5),(1,2,5)\}\quad+\mathbb N(2,2,2). 
\end{align*}
\end{theorem}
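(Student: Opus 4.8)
The plan is to exploit the tubular structure of $\mathcal{S}(4)$: building on the identification of this category with the relevant part of a tubular algebra, every indecomposable system is preprojective, preinjective, or regular, the regular ones falling into a $\mathbb{Q}\cup\{\infty\}$-indexed collection of tubular families $\mathcal{T}_\gamma$. The first fact I would isolate is that the null root $\delta$ of the associated quadratic form has dimension type exactly $(2,2,2)$, and that the tubular shift (sending $\mathcal{T}_\gamma$ to $\mathcal{T}_{\gamma+1}$) is an auto-equivalence preserving indecomposability and increasing the dimension vector by $\delta$. This single observation explains the ubiquitous $+\mathbb{N}(2,2,2)$ in the statement and reduces the theorem to describing the realizable dimension types in one fundamental domain modulo $\mathbb{N}(2,2,2)$, a finite task once the structure is in place. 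Since adding $(2,2,2)$ leaves all three differences $|x-y|,|y-z|,|z-x|$ unchanged, the cylinder condition is automatically $\mathbb{N}(2,2,2)$-invariant, a useful consistency check.

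Second, I would prove the cylinder bound for regular modules. The dimension vector of a regular indecomposable is the sum of a bounded part that is independent of $\delta$—a dimension vector read off along a tube from the quasi-top to the quasi-socle—and a multiple of $\delta$. Because $\delta$ contributes equally to $x$, $y$, and $z$, the coordinate differences of a regular module depend only on this $\delta$-independent part, which ranges over a finite, explicitly listable set. Computing the dimension types of the quasi-simple modules and the short regular modules in the tubes of rank $>1$ then bounds $|x-y|,|y-z|,|z-x|$ by $3$, placing all regular indecomposables in the cylinder. For the converse, I would exhibit for each non-hole triple in the fundamental domain an explicit indecomposable realizing it, and then apply the tubular shift to sweep out the whole $+\mathbb{N}(2,2,2)$ orbit.

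The two lists of exceptions come next, and here I would invoke the self-duality of $\mathcal{S}$ (induced by vector space duality together with orthogonal complements of the invariant subspaces), which acts on dimension types by $(x,y,z)\mapsto(z,y,x)$, to halve the work: both the holes and the extra families are stable under $x\leftrightarrow z$. The extra families $(4,0,0)+\mathbb{N}(2,2,2)$ and the rest lie outside the cylinder, with one coordinate difference equal to $4$; these are accounted for by the non-regular indecomposables (preprojective and preinjective) together with the exceptional tubes at the extremal slopes $0$ and $\infty$, and their dimension types follow by computing one Coxeter orbit of the simples and projecting, the $\mathbb{N}(2,2,2)$-periodicity again coming from $+\delta$.

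The main obstacle is the non-realizability of the holes $(1,3,1),(1,4,1),(3,1,3)+\mathbb{N}(2,2,2)$, together with the claim that every other cylinder point \emph{is} realized. Non-existence of an indecomposable with a prescribed dimension type cannot be checked by inspection; I expect to argue that no positive root of the quadratic form projects onto these triples, and—crucially—that the projection must respect the fact that $\mathcal{S}(4)$ is only an exact subcategory, so that a realizing module must actually satisfy $T^4=0$ and $U_1\subset U_2$. Keeping track of which roots genuinely support a system in $\mathcal{S}(4)$, rather than merely a module over the ambient algebra, is the delicate point, and it is exactly this constraint that carves the holes out of the otherwise solidly filled cylinder.
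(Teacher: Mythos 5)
Your skeleton---tubular structure, root-theoretic control of dimension vectors, the duality $(x,y,z)\mapsto(z,y,x)$, and reduction to a finite list of base cases modulo $(2,2,2)$---does match the paper, and your plan for the holes (show that no root of the quadratic form projects onto them) is exactly the paper's necessity argument. But the ``single observation'' you build everything on is false: there is no tubular-shift auto-equivalence that increases dimension vectors by a null root $\delta$. Any equivalence induces a $\mathbb Z$-linear map on the Grothendieck group, whereas translation by a fixed nonzero vector is not linear; moreover, for a tubular algebra the radical of $\chi_A$ has rank \emph{two}, generated here by $\bh$ and $\bh[1]$, not by a single null root. The honest periodicity that tube geometry gives you---moving $r$ steps up a ray in a tube of rank $r$ at slope $\gamma=a/b$---adds $a\,\bh[1]+b\,\bh$ to the dimension vector, which projects to $(a+b)(2,2,2)$, not $(2,2,2)$; so at every slope strictly between $0$ and $\infty$ you only get periodicity by multiples $\geq 2$ of $(2,2,2)$. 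Consequently the $+\mathbb N(2,2,2)$ statement is proved asymmetrically in the paper: for necessity, from $\chi_A(\bd+\mathbf u)=\chi_A(\bd)$ for radical $\mathbf u$, reducing every root modulo $\mathbb Z\bh+\mathbb Z\bh[1]$ to one of the finitely many roots of $\mathbb E_7$ and checking those explicitly; for sufficiency, by constructing, over auxiliary algebras $A^{(12)}$ (tame concealed of type $\widetilde{\mathbb E}_7$, radical $\bh$) and $A^{(3)}$ (tame domestic, radical $\bh[1]$), indecomposable extensions of a base module by a homogeneous module of dimension $s\bh$ via \cite[Lemma (3.1.1)]{rs}, plus a Coxeter-orbit argument for the stray family $(4,1,4)+\mathbb N(2,2,2)$. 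Your proposal contains no valid substitute for this realization step, which is the technical heart of the sufficiency direction.

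Second, your structural picture of $\mathcal S(4)$ is off in a way that matters. The paper works through a covering $\mathcal S(\widetilde 4)\to\mathcal S(4)$: indecomposable systems correspond bijectively to the fundamental domain $\mathcal D=\bigsqcup_{0<\gamma<\infty}\mathcal T_\gamma\sqcup\,\mathcal U$, so preprojective and preinjective $A$-modules contribute no new dimension types (they are graded shifts of objects of $\mathcal D$, and the projection to dimension types kills the shift), while $\mathcal U$ contains three inserted rays of $A^+$-modules that are \emph{not} modules over the tubular algebra $A$ at all. It is these rays---not preprojectives, preinjectives, or extremal-slope tubes---that realize $(4,0,0)$, $(0,4,0)$, $(0,0,4)$, $(3,0,3)$ plus $\mathbb N(2,2,2)$, and since the root/radical dichotomy for $\chi_A$ does not apply to them, they need (and in the paper get) a separate explicit check. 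Finally, the monicity constraint you rightly flag as delicate bites on the \emph{realization} side, not for the holes: for necessity, monicity only shrinks the set of candidate dimension types, so ``no root or radical vector projects onto $(3,1,3)+\mathbb N(2,2,2)$'' already suffices; for sufficiency, one must verify that the realizing modules genuinely have injective structure maps, which the paper arranges through the choice of the algebras $A^{(1)}$, $A^{(2)}$, $A^{(3)}$ and by checking Hom-vanishing against the simple $S(2'')$.
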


In particular, the triple $(3,1,3)$ is not the dimension type of any
indecomposable object in $\mathcal S(4)$, but each neighbor
$(3\pm1,1,3), (3,1\pm1,3), (3,1,3\pm1)$ is. Hence the title of the paper.

\smallskip
Note that a non-zero triple $(x,y,z)\in\mathbb N^3$ 
occurs as the dimension type
of an indecomposable object in $\mathcal S(4)$ if and only if
the triple $(x+2,y+2,z+2)$ does.
We observe that for each dimension type, all coordinate differences
are at most 4, hence it lies in the first octant part of a 
cylinder with axis $(2,2,2)$.

\medskip
We project this cylinder along its axis
onto the plane given by the equation $x+y+z=0$.  
Up to a shift by a multiple of $(2,2,2)$, 
each dimension type $(x,y,z)$ either (a) lies on one of the coordinate planes
and corresponds to a point in the diagram on the left, 
or (b) has minimum entry 1 and corresponds to a point in the diagram
on the right, or (c) is a positive integer multiple of $(2,2,2)$.

%
%      min(x,y,z) even
%
$$
\beginpicture\setcoordinatesystem units <2.5mm,2.5mm>
\put {} at -10 -10
\put {} at 10 10 
\put{$\min(x,y,z)$ even} at 0 -10
\arr{0 -8}{0 10}
\arr{8 4}{-10 -5}
\arr{-8 4}{10 -5}
\put{$x$} at 8.5 -5.5
\put{$y$} at 1.5 9.5
\put{$z$} at -8.5 -5.5
\setdots<2pt>
\plot 0 6  -6 3  -6 -3  0 -6  6 -3  6 3  0 6 /
\multiput{$\bullet$} at -4 -6  0 -6  4 -6  -2 -5  2 -5  
   -8 -4  -4 -4  0 -4  4 -4  8 -4  -6 -3  -2 -3  2 -3  6 -3
   -4 -2  0 -2  4 -2  -6 -1  -2 -1  2 -1  6 -1  -8 0  -4 0  4 0  8 0
   -6 1  -2 1  2 1  6 1  -4 2  0 2  4 2  -6 3  -2 3  2 3  6 3  
   -4 4  0 4  4 4  -2 5  2 5  -4 6  0 6  4 6  0 8 /
\put{$\bigcirc$} at 0 0 
\endpicture
\qquad
%
%          min(x,y,z) odd
%
\beginpicture\setcoordinatesystem units <2.5mm,2.5mm>
\put {} at -10 -10
\put {} at 10 10 
\put{$\min(x,y,z)$ odd} at 0 -10
\arr{0 -8}{0 10}
\arr{8 4}{-10 -5}
\arr{-8 4}{10 -5}
\put{$x$} at 8.5 -5.5
\put{$y$} at 1.5 9.5
\put{$z$} at -8.5 -5.5
\setlinear
\setdots<2pt>
\plot 0 6  -6 3  -6 -3  0 -6  6 -3  6 3  0 6 /
\multiput{$\bullet$} at  0 -6   -2 -5  2 -5  
   -4 -4  4 -4  -6 -3  -2 -3  2 -3  6 -3
   -8 -2  -4 -2  0 -2  4 -2  8 -2  -6 -1  -2 -1  2 -1  6 -1  -4 0  0 0  4 0
   -6 1  -2 1  2 1  6 1  -8 2  -4 2  0 2  4 2  8 2  -6 3  -2 3  2 3  6 3  
   -4 4  4 4  -2 5  2 5  /
\endpicture
$$

\subsection{No gaps but holes}
%-----------------------------

Suppose $\mathcal C$ is an exact category such that the Grothendieck
group has basis given by the classes of $m$ simple objects.
The {\it dimension type} of an object in $\mathcal C$ is the 
corresponding element in $\mathbb Z^m$.
We consider the set $\mathcal V$ of all dimension types of 
indecomposable objects.
By $B(r)$ we denote the (closed) 
ball in $\mathbb R^m$ with center 0 and radius $r$.
We are interested in topological properties of $\mathcal V$, or rather in 
topological properties of the region $\mathcal V+B(r)$ in $\mathbb R^m$,
for some suitable radius $r$.

\medskip{\it Is $\mathcal V$ connected?}  

More precisely, we ask if the region
$\mathcal V+B(\frac12)$ is connected.

\smallskip
In \cite[p.\ 655]{ringel2011}, Ringel states the following open problem.
Let $A$ be a finite dimensional
algebra over an algebraically closed field.  Given an indecomposable $A$-module
of length $n>1$, is there an indecomposable submodule or factor module of dimension
$n-1$?

\smallskip
We note that a positive answer to Ringel's question yields that the set
$\mathcal V$ of dimension types of indecomposable $A$-modules is connected,
provided only that the algebra $A$ is connected.

\smallskip
For modules over a finite dimensional algebra $A$ over an algebraically closed field,
the No Gap Theorem states that whenever there is an indecomposable $A$-module of
length $n>1$, then there is one of length $n-1$.
Note that the connectedness of $\mathcal V$ implies the statement in the No 
Gap Theorem \cite[Theorem~1]{bongartz2013}.

\medskip{\it Is $\mathcal V$ simply connected?}

Here we are interested in the
region $\mathcal V+B(\frac12\sqrt2)$ in $\mathbb R^m$.

\smallskip
Note that $\mathcal V$ is not always simply connected. 
For some algebras, for example $kQ/(\alpha^2)$
where $Q$ is a cycle of length at least 3 with all arrows labelled $\alpha$, 
the set $\mathcal V$ has a hole at the origin. 

\smallskip
We present here an example from the theory of linear operators with one
invariant subspace.
The category $\mathcal S_1(5)$ 
studied in \cite[(6.5)]{rs} consists of 
all triples $(V,T,U)$ where $V$ is a finite dimensional vector space, $T:V\to V$
a linear operator acting nilpotently with nilpotency index at most 5, and $U$
a subspace of $V$ invariant under $T$.  
There are two simple objects, $(k,0,k)$ and $(k,0,0)$; 
their multiplicities as composition factors of an object $(V,T,U)$
define the dimension pair $(x,y)$ where $x=\dim U$, $y=\dim V/U$.
In this example, the set $\mathcal V$ consists of the following 50 points.
$$\hbox{\beginpicture
\setcoordinatesystem units <.5cm,.5cm>
\arr{0 -1}{0 8}
\arr{-1 0}{8 0}
\put{$x$} at  0.5 8.2
\put{$y$} at  8.2 -.5
\setdashes <.5mm> 
\setplotarea x from 0 to 7, y from 0 to 7 
\grid {7} {7}
\plot -1 -1  7.5 7.5 /
% \plot -1 4.5   2 7.5  /
% \plot 4.5 -1  7.5 2 /
\setsolid
\plot 5 -0.5  5 0.2 /
\put{$\ssize 5$} at 4.7 -.5
\plot -0.2 5  0.2 5 /
\put{$\ssize 5$} at -.5 4.75 
\put{$\ssize 0$} at -.5 .3 
\multiput{$\ssize \bullet$} at 1 0  2 0  3 0  4 0  5 0  
                             0 1  1 1  2 1  3 1  4 1
                             0 2  1 2  2 2  3 2  0 3  1 3  2 3  0 4  1 4  0 5  
                             2 2  3 2  4 2  2 3  3 3  2 4  3 4
	                     5 2  6 2  4 3  5 3  2 4  4 4  2 5  3 5  2 6 
                             6 3  5 4  4 5  3 6  6 4  4 6  6 5  5 6  6 6 /
\multiput{$\ssize 1$} at 1.2 .3   2.2 .3  3.2 .3   4.2 .3   5.2 .3 
                      .2 1.3  1.2 1.4  2.2 1.3  3.2 1.3  4.2 1.3  
                    .2 2.3  1.2 2.3   .2 3.3  1.2 3.3  .2 4.3  1.2 4.3  .5 5.25 /
\multiput{$\ssize 2$} at 5.2 2.3  6.2 2.3  5.2 3.3  
                        2.2 5.3  3.2 5.3  2.2 6.3 /
\multiput{$\ssize 12$} at 3.4 2.3  2.4 3.3  2.3 2.5 /
\multiput{$\ssize 22$} at 4.4 2.3  3.3 3.5  4.4 3.3  2.4 4.3  3.4 4.3  4.3 4.5 /
\multiput{$\ssize 3$} at 6.2 3.3  5.2 4.3  4.2 5.3  3.2 6.3  6.2 4.3  4.2 6.3
	                 6.2 5.3  5.2 6.3  6.2 6.4 /
\endpicture}
$$
(For each indecomposable system, the number next to the vertex counts the 
Jordan blocks of the linear operator $(V,T)$.)

\smallskip
In this example, $\mathcal V$ is connected but not simply connected since the 
vertex $(5,5)$ is missing.

\medskip{\it Is it possible that $\mathcal V$ has holes?}

Here we consider the region $\mathcal V+B(\frac13\sqrt6)$ in $\mathbb R^m$.
The radius is such that the octahedron with vertex set $\{(\pm1,0,0),(0,\pm1,0),(0,0,\pm1)\}$
is simply connected and has a hole at the origin.

\medskip
We deduce from Theorem~\ref{theorem-dimension-triples} that
the set $\mathcal V$ for the category $\mathcal S(4)$ is connected and simply
connected but has holes in positions $(3,1,3)$, $(5,3,5)$,
$(7,5,7)$, etc.

\subsection{Linear operators in control theory}
%-------------------------------------------------

We would like to point out that 
linear operators with two invariant subspaces occur naturally
in the theory of linear time-invariant dynamical systems.
Such a system $\Sigma$ consists of the following 
first order differential equations.
$$\Sigma:\quad \left\{\begin{array}{rl}\dot x(t) & = B\, x(t)+A\, u(t)\\ y(t) &= C\, x(t)
\end{array}\right.$$
Here, $x(t)\in V$ is the {\it state}, $u(t)\in U$ the {\it input} or {\it control},
and $y(t)\in W$ the {\it output} at time $t$.  The term {\it time invariance}
refers to the linear maps $A$, $B$ and $C$. 
Note that the data in $\Sigma$ define a representation of the following quiver.

$$
\hbox{\beginpicture
\setcoordinatesystem units <1cm,1cm>
%==========================================
\put{} at 0 0
\put{} at 4 1.2
%\multiput{$\circ$} at 0 0  2 0  4 0 /
\arr{0.4 0}{1.6 0}
\arr{2.4 0}{3.6 0}

\put{$U$} at  0 0
\put{$V$} at  2 0
\put{$W$} at  4 0

\circulararc 300 degrees from 2.3 .1 center at 2 .6
\arr{2.4 .18}{2.3 .1}
\put{$\ssize B$} at 2.9 .6
\put{$\ssize A$} at 1 -.3
\put{$\ssize C$} at 3 -.3
\endpicture}
$$

\medskip
A system $\Sigma$ is {\it completely controllable} if any state can be reached
from the zero state in finite time, for some piecewise continuous input
function $u(t)$. Dually, 
$\Sigma$ is {\it completely observable} if any state can be 
uniquely determined from the values of $y(t)$, taken over a finite time 
interval.
The controllable subspace $U_2$ of $V$ is the sum of the images
$U_2=\sum_{k\geq 0}\Im B^kA$ \cite[Theorem~1.24]{ks}, 
while the non-observable subspace is given by 
the intersection of the kernels $U_1'=\bigcap_{k\geq0}\Ker CB^k$
\cite[Theorem~1.33]{ks}.  Clearly, both $U_2$ and $U_1'$ are invariant under
multiplication by $B$. Putting $T=B$ and $U_1=U_1'\cap U_2$, we obtain a 
quadruple $(V,T,U_1,U_2)$ of a linear operator and two invariant subspaces.
This system is in $\mathcal S(n)$ if all Jordan blocks for the operator $B$
have size at most $n$.

\medskip
The Kalman decomposition of a control system $\Sigma$ yields a system,
called the {\it minimal realization,}  
which is completely controllable and completely observable.
The state space of this system is obtained as the subquotient
$U_2/U_1$ of the subspaces in the quadruple  $(V,T,U_1,U_2)$.

$$
\hbox{\beginpicture
\setcoordinatesystem units <1cm,1cm>
%==========================================
\put{} at 0 0
\put{} at 4 1.2
%\multiput{$\circ$} at 0 0  2 0  4 0 /
\arr{0.4 0}{1.6 0}
\arr{2.4 0}{3.6 0}

\put{$U$} at  0 0
\put{$\displaystyle \frac{U_2\strut}{U_1\strut}$} at  2 0
\put{$W$} at  4 0

\circulararc 300 degrees from 2.3 .1 center at 2 .6
\arr{2.4 .18}{2.3 .1}
\put{$\ssize{\overline B}$} at 2.9 .6
\put{$\ssize{\overline A}$} at 1 -.3
\put{$\ssize{\overline C}$} at 3 -.3
\endpicture}
$$

\subsection{Related results}
%---------------------------
The problem of classifying the embeddings of an 
invariant subspace in a linear operator
can be traced back to the 1934 paper by Garrett Birkhoff \cite{B}
where he asks the corresponding question for embeddings of a subgroup
in an abelian group.
There have been many generalizations of this problem, in particular
in \cite{simson}, the representation types of categories of chains
of invariant subspaces (which includes our case of chains of length 2)
have been determined.
For homological properties of chain categories we refer the reader to
\cite{xzz}.  
In \cite{moore}, the Auslander and Ringel-Tachikawa Theorem
has been shown for chain categories.
The relation with singularity theory has been pointed out
in \cite{klm}.

\subsection{Organization of this paper}
%--------------------------------------
In Section~2 we review the description of the category $\mathcal S(4)$
as given in \cite{diss}.
In particular, the indecomposable objects in $\mathcal S(4)$ either correspondto certain modules over a tubular algebra $A$, or else occur on one of three rays 
that are inserted in the Auslander-Reiten quiver for $A$.

\smallskip
We compute the dimension types which can be realized by $A$-modules
in Section~3. 
Two directions are necessary:  First, we show that dimension types which are 
not in the list cannot occur since they are not roots of a certain integral 
quadratic form.  Second, the dimension types in the list will be realized by
modules over some tame domestic algebra.

\smallskip
In Section~4 we list the dimension types
for the indecomposable systems in the categories $\mathcal S(1)$,
$\mathcal S(2)$, $\mathcal S(3)$.

\section{The category $\mathcal S(4)$}
%-------------------------------------
\label{section-s4}

The main result is a consequence of the detailed investigation of
the category $\mathcal S(4)$ in \cite{diss}.
In this section we review some of the results.

\subsection{Quiver representations}
%----------------------------------
\label{section-quiver-representations}
We can consider the objects in $\mathcal S(4)$ as quadruples 
$(V,T,U_1,U_2)$, or alternatively, as nilpotent linear operators with
two invariant subspaces, one contained in the other. 
There is a third way to describe those objects, namely as representations
of the quiver
$$
\hbox{\beginpicture
\setcoordinatesystem units <.7cm,.7cm>
%==========================================
\put{} at 0 0
\put{} at 4 1.2
\put{$Q:$} at -3 0
\multiput{$\circ$} at 0 0  2 0  4 0 /
\arr{0.4 0}{1.6 0}
\arr{2.4 0}{3.6 0}
\circulararc 300 degrees from .3 .1 center at 0 .6
\arr{.4 .18}{.3 .1}
\circulararc 300 degrees from 2.3 .1 center at 2 .6
\arr{2.4 .18}{2.3 .1}
\circulararc 300 degrees from 4.3 .1 center at 4 .6
\arr{4.4 .18}{4.3 .1}

\put{$\ssize \beta'$} at 1 -.5
\put{$\ssize \beta$} at 3 -.5
\put{$\ssize \alpha''$} at .9 .8
\put{$\ssize \alpha'$} at 2.9 .8
\put{$\ssize \alpha$} at 4.9 .8
\endpicture}
$$
which satisfy the commutativity relations $\beta\alpha'=\alpha\beta$,
$\beta'\alpha''=\alpha'\beta'$; the nilpotency relation $\alpha^4=0$
and the additional condition that the maps representing $\beta$
and $\beta'$ are monomorphisms.

\subsection{Coverings}
%---------------------
We will also consider the following quiver which is the universal
covering for $Q$.
$$
\hbox{\beginpicture
\setcoordinatesystem units <0.5cm,0.4cm>
%==========================================
\put{} at -4 -2.2
\put{} at 4 8.4
\put{$\widetilde Q:$} at -4 3
\multiput{$\circ$} at 0 0  0 2  0 4  0 6  2 0  2 2  2 4  2 6  4 0  4 2  4 4  4 6 /
\arr{0 7.6}{0 6.4} \arr{0 5.6}{0 4.4} \arr{0 3.6}{0 2.4} \arr{0 1.6}{0 0.4} \arr{0 -.4}{0 -1.6}
\arr{2 7.6}{2 6.4} \arr{2 5.6}{2 4.4} \arr{2 3.6}{2 2.4} \arr{2 1.6}{2 0.4} \arr{2 -.4}{2 -1.6}
\arr{4 7.6}{4 6.4} \arr{4 5.6}{4 4.4} \arr{4 3.6}{4 2.4} \arr{4 1.6}{4 0.4} \arr{4 -.4}{4 -1.6}
\arr{0.4 6}{1.6 6} \arr{2.4 6}{3.6 6}
\arr{0.4 4}{1.6 4} \arr{2.4 4}{3.6 4}
\arr{0.4 2}{1.6 2} \arr{2.4 2}{3.6 2}
\arr{0.4 0}{1.6 0} \arr{2.4 0}{3.6 0}

\put{$\ssize 3''$} at  -.5 4
\put{$\ssize 2''$} at  -.5 2
\put{$\ssize 1''$} at  -.5 0
\put{$\ssize 4''$} at  -.5 6
\put{$\ssize 4'$} at  2.5 6.5
\put{$\ssize 3'$} at  2.5 4.5
\put{$\ssize 2'$} at  2.5 2.5
\put{$\ssize 1'$} at  2.5 0.5
\put{$\ssize 1$} at 4.5 0
\put{$\ssize 2$} at 4.5 2
\put{$\ssize 3$} at 4.5 4
\put{$\ssize 4$} at 4.5 6

\multiput{$\ssize \alpha$} at 4.5 1  4.5 3  4.5 5 /
\multiput{$\ssize \alpha'$} at 1.5 1  1.5 3  1.5 5 /
\multiput{$\ssize \alpha''$} at -.5 1  -.5 3  -.5 5 /
\multiput{$\ssize \beta$} at 3 5.5  3 3.5  3 1.5  3 -.5 /
\multiput{$\ssize \beta'$} at 1 5.5  1 3.5  1 1.5  1 -.5 /
\multiput{$\vdots$} at 0 8.4  2 8.4  4 8.4  0 -2.2  2 -2.2  4 -2.2 /
\endpicture}
$$

Denote by $\mathcal C(\widetilde 4)$ the category of all 
finite dimensional representations of the 
quiver $\widetilde Q$ subject to the commutativity relations
$\alpha\beta=\beta\alpha'$ and $\alpha'\beta'=\beta'\alpha''$ and 
the nilpotency relations $\alpha^4=0$, $\alpha'^4=0$, $\alpha''^4=0$.
% Thus, if $A_\infty^\infty$ denotes the double infinite linear quiver with all
% arrows labelled $\alpha$, then 
% $\mathcal C(\widetilde 4)$ is the category of pairs of composable maps between
% $kA_\infty^\infty/(\alpha^4)$-modules.

\medskip
The category $\mathcal C(\widetilde 4)$ will serve as a reference category,
we denote by $K_0(\mathcal C(\widetilde 4))$ or $K_0$ its Grothendieck group.
This is a free abelian group with basis elements $\be_z$ corresponding to
the vertices $z$ of $\widetilde Q$. For a representation $M$ of $\widetilde Q$, 
the corresponding element in $K_0$ is denoted by its dimension vector $\dim M$;
if $d=\dim M$ and if $z$ is a vertex in $\widetilde Q$, then 
the corresponding component $d_z$ of $d$ is just the dimension of the vector
space $M_z$ at position $z$.

\medskip
Let $\mathcal S(\widetilde 4)$ be the full subcategory of $\mathcal C(\widetilde 4)$
of all representations $M$ for which the maps $M_\beta$, $M_{\beta'}$ are all monic.
Thus, an object in $\mathcal S(\widetilde 4)$ is just a representation of
$kA_\infty^\infty/(\alpha^4)$, together with two subrepresentations, one contained in the other.

\medskip
The graded shift $[1]$ acts on $\mathcal C(\widetilde 4)$ and hence 
on $\mathcal S(\widetilde 4)$,
it maps a representation $M$ to the representation $M[1]$ which is given by the vector
spaces $M[1]_z=M_{z-1}$, $z$ a vertex in $\widetilde Q$, 
where we use the notation $i'-1=(i-1)'$, $i''-1=(i-1)''$ 
for $i\in \mathbb Z$.

\subsection{The tubular algebra $A$}
%-----------------------------------
Consider the path algebra $A$ of the quiver $Q_A$ modulo the 
relations as indicated:  One commutativity relation for the rectangle,
and three zero relations.
$$
\hbox{\beginpicture
\setcoordinatesystem units <0.5cm,0.5cm>
%==========================================
\put{} at -3 0
\put{} at 6 10
\put{$Q_A:$} at -3 4
\multiput{$\circ$} at 0 2  0 4  2 6  4 0  4 2  4 4  4 6  4 8  6 4 /
\arr{2.4 6}{3.6 6}
\arr{0.4 4}{3.6 4}
\arr{0.4 2}{3.6 2}
\arr{0 3.6}{0 2.4}
\arr{4 7.6}{4 6.4}
\arr{4 5.6}{4 4.4}
\arr{4 3.6}{4 2.4}
\arr{4 1.6}{4 .4}
\arr{4.4 4}{5.6 4}

\put{$\ssize 3''$} at  -.5 4
\put{$\ssize 2''$} at  -.5 2
\put{$\ssize 4'$} at  1.5 6

\put{$\ssize \bar3$} at 6.5 4
\put{$\ssize 1$} at 3.5 0
\put{$\ssize 2$} at 3.5 1.5
\put{$\ssize 3$} at 3.5 3.5
\put{$\ssize 4$} at 3.5 6.5
\put{$\ssize 5$} at 3.5 8

\setdots<2pt>
\plot .6 3.7  3.4 2.3 /
\plot 4.2 7.9  4.4 7.8  4.5 7.6  4.5 0.4  4.4 0.2  4.2 0.1 /
\plot .1 4.2  .2 4.4  .4 4.5  5.6 4.5  5.8 4.4  5.9 4.2 /
\plot 2.1 5.8  2.2 5.6  2.4 5.5  5.7 5.5  5.9 5.4  6 5.2  6 4.4 /
\endpicture}
\qquad
\hbox{\beginpicture
\setcoordinatesystem units <0.5cm,0.5cm>
%==========================================
\put{} at -3 0
\put{} at 6 10
\put{$Q_A^+:$} at -3 4
\multiput{$\circ$} at 0 2  0 4  0 6  2 6  2 8  4 0  4 2  4 4  4 6  4 8  4 10 
     6 4 /
\arr{2.4 6}{3.6 6}
\arr{0.4 4}{3.6 4}
\arr{0.4 2}{3.6 2}
\arr{0 3.6}{0 2.4}
\arr{4 7.6}{4 6.4}
\arr{4 5.6}{4 4.4}
\arr{4 3.6}{4 2.4}
\arr{4 1.6}{4 .4}
\arr{4.4 4}{5.6 4}
\arr{4 9.6}{4 8.4}
\arr{2.4 8}{3.6 8}
\arr{2 7.6}{2 6.4}
\arr{.4 6}{1.6 6}
\arr{0 5.6}{0 4.4}
\put{$\ssize 3''$} at  -.5 4
\put{$\ssize 2''$} at  -.5 2
\put{$\ssize 4'$} at  1.5 6.5

\put{$\ssize \bar3$} at 6.5 4
\put{$\ssize 1$} at 3.5 0
\put{$\ssize 2$} at 3.5 1.5
\put{$\ssize 3$} at 3.5 3.5
\put{$\ssize 4$} at 3.5 6.9
\put{$\ssize 5$} at 3.5 8.5

\put{$\ssize 4''$} at -.5 6
\put{$\ssize 5'$} at 1.5 8
\put{$\ssize 6$} at 3.5 10

\setdots<2pt>
\plot .6 3.7  3.4 2.3 /
\plot 4.2 7.9  4.4 7.8  4.5 7.6  4.5 0.4  4.4 0.2  4.2 0.1 /
\plot 4.2 9.9  4.6 9.8  4.7 9.6  4.7 2.4  4.6 2.2  4.2 2.1 /
\plot .1 4.2  .2 4.4  .4 4.5  5.6 4.5  5.8 4.4  5.9 4.2 /
\plot 2.1 5.8  2.2 5.6  2.4 5.5  5.7 5.5  5.9 5.4  6 5.2  6 4.4 /
\plot .6 5.7  3.4 4.3 /
\plot 2.3 7.7  3.7 6.3 /
\endpicture}
$$

\begin{theorem}[{\cite[Theorem 3.4]{diss}}]
The algebra $A$ is a tubular algebra of type $\mathbb T=(4,4,2)$.
\qed
\end{theorem}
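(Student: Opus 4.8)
The plan is to verify Ringel's structural definition of a tubular algebra directly, by realizing $A$ as a branch enlargement of a tame concealed convex subcategory and reading off the resulting tubular type. Recall that a representation-infinite algebra is tubular exactly when it is simultaneously a tubular extension and a tubular coextension of a tame concealed algebra, the enlargement bringing the tubular type of the separating $\mathbb P_1(k)$-family up to one of the four critical types $(2,2,2,2)$, $(3,3,3)$, $(4,4,2)$, $(6,3,2)$. For the target type $(4,4,2)$ the weights satisfy $\tfrac34+\tfrac34+\tfrac12=2$, and it is this numerical signature --- together with the corresponding corank-$2$ Euler form --- that the argument must produce.

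First I would isolate the tame concealed core. The quiver $Q_A$ has nine vertices, and I expect exactly one of them, the peripheral vertex $\bar3$ at the end of the arrow $3\to\bar3$, to be a branch (extension) vertex; deleting it leaves a full convex subcategory $C$ on eight vertices. I would then identify $C$ as tame concealed of type $\widetilde{E}_7$, most economically by computing its Euler (Tits) form and recognizing it as the non-negative corank-$1$ unit form of $\widetilde{E}_7$, and by matching the quiver and relations of $C$ against the standard list of tame concealed algebras; alternatively one exhibits a tilting module over $k\widetilde{E}_7$ whose endomorphism ring is $C$. Either way $C$ then carries a separating $\mathbb P_1(k)$-family $\mathcal T^C$ of stable tubes of tubular type $(4,3,2)$, since $\tfrac34+\tfrac23+\tfrac12<2$ places $C$ in the domestic range.

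Next I would locate the module used for the enlargement. Writing $A$ as the one-point (co)extension of $C$ determined by the vertex $\bar3$ together with the commutativity and the three zero relations, I would identify the associated $C$-module $M$ and show that it is a regular simple module sitting at the mouth of the rank-$3$ tube of $\mathcal T^C$. A single branch enlargement by such a module inserts one ray and raises the rank of that tube from $3$ to $4$, while the tubes of ranks $4$ and $2$ are left untouched; hence the enlarged family has tubular type $(4,4,2)$. As an independent check I would confirm that the Euler form of $A$ is now non-negative of corank $2$, with radical spanned by the two positive generators of slopes $0$ and $\infty$ --- the homological fingerprint of a tubular algebra. By Ringel's characterization this identifies $A$ as tubular of type $\mathbb T=(4,4,2)$.

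The crux is the middle step: proving that $C$ really is tame concealed and pinning down the position of $M$ inside $\mathcal T^C$. One must check that the zero and commutativity relations force $M$ to lie at the mouth of the rank-$3$ tube rather than deeper in a tube or inside a homogeneous tube, since the latter would yield the wrong type; this needs an explicit grip on the regular $C$-modules, obtained either through the tilting functor from the hereditary $\widetilde{E}_7$ algebra or by a direct calculation within the tube. The remaining items --- the vertex count $8+1=9$, the identity $\tfrac34+\tfrac34+\tfrac12=2$, and the corank of the Euler form --- are routine bookkeeping.
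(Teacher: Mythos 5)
Your overall template is the right one --- Ringel's criterion: exhibit $A$ as a tubular (co)extension of a tame concealed algebra with extension type $(4,4,2)$, and confirm with the corank-$2$ Euler form. (The paper itself offers no proof; it cites Theorem~3.4 of the dissertation, which presumably runs along exactly these lines.) But your specific decomposition is wrong, and the error is precisely in what you dismiss as ``routine bookkeeping.'' You posit that deleting the single vertex $\bar3$ leaves a tame concealed core $C$ of type $\widetilde{\mathbb E}_7$ with tube ranks $(4,3,2)$, so that $A$ is a one-point (co)extension inserting one ray into the rank-$3$ tube. If that were the structure, then the nonstable tubular family of $A$ on the coextension side would contain exactly \emph{one} injective (at the unique branch vertex $\bar3$), and the family on the extension side exactly one projective. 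The paper's own description of $A$-mod in Section~2 rules this out: $\mathcal T_0$ contains \emph{three} projective objects and $\mathcal T_\infty$ contains \emph{two} injective objects. Since in a tubular algebra the number of projectives in $\mathcal T_0$ (respectively injectives in $\mathcal T_\infty$) equals the number of extension (respectively coextension) vertices, the tame concealed cores of $A$ have $9-3=6$ and $9-2=7$ vertices --- never $8$. So $A$ is not a one-point extension or coextension of a tame concealed algebra at all, and your crux step (showing the relevant module is simple regular at the mouth of a rank-$3$ tube of $C$) cannot succeed.

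There is also a quick internal check that the $8$-vertex algebra $C$ is not concealed. The radical generator $\mathbf h[1]$ of $\chi_A$ vanishes at $\bar3$, so its restriction to $C$ lies in the radical of $\chi_C$; but this restriction has entry $0$ at vertex $1$ (its entries along the column $5,4,3,2,1$ are $1,2,2,1,0$), hence is not sincere. A tame concealed algebra has a \emph{sincere} positive radical generator (the dimension vector of its homogeneous simples), so $C$ is tame but not concealed --- it is in the same situation as the algebra $A^{(3)}$ of Section~3, which the authors are careful to call only tame domestic. Note also that the $\widetilde{\mathbb E}_7$-concealed algebra that genuinely appears in this story, $A^{(12)}$, is supported on the vertices $1'',2'',2',3',1,2,3,4$ of the covering quiver and is not a subcategory of $A$ obtained by deleting $\bar3$. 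To repair the argument you would need to identify the correct $6$-vertex core $C_0$ (three branch vertices) and $7$-vertex core $C_\infty$ (two branch vertices), verify these are tame concealed, and check that the branches are attached at simple regular modules so as to raise the tubular type to $(4,4,2)$; beyond the wrong vertex count, your proposal in any case defers exactly these verifications rather than carrying them out.
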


In particular, the indecomposable $A$-modules occur in the following 
families:  A preprojective component $\mathcal P$;
a nonstable family of tubes $\mathcal T_0$ with three projective objects;
for each $\gamma\in\mathbb Q^+$ a stable family $\mathcal T_\gamma$ 
of tubes of type $\mathbb T$;  a nonstable family $\mathcal T_\infty$
with two injective objects; and a preinjective component.

\medskip
An $A$-module $N$ can be considered an object, say $M$, 
in $\mathcal C(\widetilde 4)$ as follows.  
$$M_i=\left\{\begin{array}{cc}N_i & 1\leq i\leq 5\\ 0 &\text{otherwise}
\end{array}\right.\quad
M'_i=\left\{\begin{array}{cc}N_i & i< 3 \\ 
   K & i=3 \\ N'_4 & i=4\\ 0 & i>4 \end{array}\right.
\quad M''_i=\left\{\begin{array}{cc}N_i & i<2 \\ N''_i & i=2,3\\ 0 & i>3
  \end{array}\right.$$
Here, $K$ is the kernel of the map $\pi: N_3\to \bar N_3$.  Note that if 
$N$ has no direct summand isomorphic to the projective $A$-module 
$P(\bar 3)$,
then the map $\pi$ is an epimorphism and $N$ can be recovered from $M$.

\medskip
Each family $\mathcal T_\gamma$ consists of three extended tubes 
of circumference 4, 4, and 2, respectively, and a one-parameter family
of homogeneous tubes.  If $\gamma=\frac ab\in\mathbb Q^+_0\cup\{\infty\}$
written in lowest terms, then the 
quasisimple module in each homogeneous tube has dimension type
$a\,{\mathbf h}[1]+b\,{\mathbf h}$.  Here, $\mathbf h$ and $\mathbf h[1]$
are the dimension type of quasisimple homogeneous objects in $\mathcal T_0$
and $\mathcal T_\infty$ (written as objects in $\mathcal S(\widetilde 4)$):
$$ {\mathbf h} = \begin{smallmatrix} 
&&0 \\ &0&1\\ 0&1&2\\ 1&2&2\\ 1&1&1
\end{smallmatrix},\quad
{\mathbf h}[1] = \begin{smallmatrix} 
&&1 \\ &1&2\\ 1&2&2\\ 1&1&1\\ 0&0&0
\end{smallmatrix}$$

\subsection{Three inserted rays}\label{section-three-rays}
%-------------------------------
So far, none of the projective indecomposable objects 
in $\mathcal C(\widetilde 4)$ occurs as an $A$-module.
But it turns out that the radicals of the projective objects 
$P(4'')$, $P(5')$ and $P(6)$ occur in the component $\mathcal T_\infty$
of $A$-mod.  
One of the two tubes of circumference 4 contains $\rad P(4'')$ and $\rad P(6)$,
the other contains $\rad P(5')$.
The path algebra $A^+$ obtained by forming the corresponding
three one-point extensions is given by the quiver $Q_A^+$
with relations, as pictured above.

\medskip
The Auslander-Reiten quiver for $A^+$-mod has the following shape.
First, there are the preprojective component $\mathcal P$, 
the nonstable family of tubes $\mathcal T_0$, 
and the stable families of tubes $\mathcal T_\gamma$ 
of type $\mathbb T$ where $\gamma\in\mathbb Q^+$.
They all consist of $A$-modules.

\smallskip
The next family of tubes, $\mathcal T_\infty^+$, is obtained 
from $\mathcal T_\infty$ by inserting three rays at the modules 
$\rad P(4'')$, $\rad P(6)$ and $\rad P(5')$. 
Then there are further components which we will not specify.

\medskip
The indecomposable $A^+$-modules, with the exception of the projective
$P(\bar 3)$ are all objects in the category $\mathcal C(\widetilde 4)$;
the above identification of $A$-modules with no summand isomorphic to $P(\bar 3)$
as objects in $\mathcal C(\widetilde 4)$ can be adapted.
Moreover, the families $\mathcal P$ (with the exception of $P(\bar 3)$), 
$\mathcal T_0$, and $\mathcal T_\gamma$ for $0<\gamma<\infty$
consist of objects in the subcategory $\mathcal S(\widetilde 4)$. 
The simple object $S(2'')$ occurs on the mouth of a tube 
in $\mathcal T^+_\infty$; note that $S(2'')$ and all modules $X$ with
$\Hom(X,S(2''))\neq0$ are not in $\mathcal S(\widetilde 4)$ since the maps
$X_{2''}\to X_{2'}$ are not monic.
Within the mentioned components, such modules $X$ occur exactly on one coray
ending at the object $S(2'')$ in $\mathcal T^+_\infty$.
We write $\mathcal U=\mathcal T^+_\infty\cap
\mathcal S(\widetilde 4)$ and define 
$\mathcal D=\bigsqcup_{\gamma\in\mathbb Q^+}\mathcal T_\gamma\sqcup \mathcal U$.
Thus $\mathcal U$ consists of the following components:
A one parameter family of homogeneous tubes, each quasisimple module
has dimension type $\mathbf h[1]$; 
a stable extended tube of
circumference 2; an extended tube of circumference 4 which contains the
projective-injective object $P(5')$; and an extended tube of 
circumference 4 which contains the two projective-injective objects
$P(4'')$ and $P(6)$.

\begin{theorem}[{\cite[Proposition 3.12]{diss}}]
The modules in $\mathcal D$
form the fundamental domain for the shift $[1]$ in $\mathcal S(\widetilde 4)$.
\qed
\end{theorem}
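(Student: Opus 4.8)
The statement asserts that $\mathcal{D}$ is a fundamental domain, i.e.\ that every indecomposable object of $\mathcal{S}(\widetilde 4)$ is isomorphic to $D[n]$ for a \emph{unique} $D$ in $\mathcal{D}$ and a \emph{unique} $n\in\mathbb{Z}$. The plan is to split this into a \emph{covering} statement (every indecomposable is some shift of a $\mathcal{D}$-object) and a \emph{disjointness} statement (the $[1]$-orbit of each $\mathcal{D}$-object meets $\mathcal{D}$ in that object only). The whole argument rests on understanding the action of the graded shift $[1]$ on the tubular structure transported into $\mathcal{C}(\widetilde 4)$ through the embedding of $A^+$-mod.

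First I would pin down the action of $[1]$ on slopes. Since $\dim M[1]_z=\dim M_{z-1}$, the shift is additive on $K_0$ and carries the quasisimple $\mathbf{h}$ of $\mathcal{T}_0$ to the quasisimple $\mathbf{h}[1]$ of $\mathcal{T}_\infty$; more generally it raises the level-support of a dimension vector by one. Thus $[1]$ sends the slope-$0$ family $\mathcal{T}_0$ into the slope-$\infty$ region occupied by $\mathcal{T}_\infty^+$. Because $[1]$ is an autoequivalence of $\mathcal{S}(\widetilde 4)$ it maps $\mathcal{S}$-objects to $\mathcal{S}$-objects, whence $\mathcal{T}_0[1]\subseteq\mathcal{T}_\infty^+\cap\mathcal{S}(\widetilde 4)=\mathcal{U}$. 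This is the key identification that lets one wrap the half-open slope interval $(0,\infty]$ into a single period.

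For the covering statement I would use that every indecomposable of $\mathcal{C}(\widetilde 4)$ is a shift of an indecomposable $A^+$-module different from $P(\bar 3)$, and then intersect with $\mathcal{S}(\widetilde 4)$. The families $\mathcal{T}_\gamma$ with $0<\gamma<\infty$ and the objects of $\mathcal{U}$ already lie in $\mathcal{D}$; the family $\mathcal{T}_0$ is covered since $\mathcal{T}_0=(\mathcal{T}_0[1])[-1]\subseteq\mathcal{U}[-1]$ by the previous paragraph; and the preprojective objects (all of $\mathcal{P}$ except $P(\bar 3)$) are covered because their shifts supply precisely the modules inserted on the three rays of $\mathcal{T}_\infty^+$, built over $\rad P(4'')$, $\rad P(6)$ and $\rad P(5')$. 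Running through each component and checking that every $\mathcal{S}(\widetilde 4)$-object in the slab is either in $\mathcal{D}$ or a shift of one then yields surjectivity.

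For disjointness I would use that the level-support of a dimension vector determines the shift: if $D[n]\cong D'$ with $D,D'\in\mathcal{D}$, comparison of dimension types forces $n=0$, and then the slope invariant separates the distinct $\mathcal{T}_\gamma$ from one another and from $\mathcal{U}$. Choosing the half-open interval, including the $\infty$-end $\mathcal{U}$ and excluding the $0$-end $\mathcal{T}_0$, is exactly what removes the only potential overlap, namely $\mathcal{T}_0[1]\subseteq\mathcal{U}$. \emph{The main obstacle} is the boundary analysis at the wrap-around: one must verify that $\mathcal{U}$ is cut out of $\mathcal{T}_\infty^+$ by removing precisely the coray ending at $S(2'')$ (the modules whose map $X_{2''}\to X_{2'}$ fails to be monic), that the inserted-ray modules of $\mathcal{T}_\infty^+$ match up exactly with the shifted preprojective and $\mathcal{T}_0$ objects with neither gap nor repetition, and that no preinjective object nor anything in the $P(\bar 3)$-orbit slips into $\mathcal{S}(\widetilde 4)$ unaccounted for. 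This tube-by-tube bookkeeping, rather than any single hard inequality, is where the real work lies.
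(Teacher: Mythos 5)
First, a point of reference: the paper does not prove this statement at all --- it is quoted from \cite[Proposition 3.12]{diss} with the proof deferred to the dissertation --- so your proposal has to stand on its own, and its central step does not. For the covering half you propose to ``use that every indecomposable of $\mathcal C(\widetilde 4)$ is a shift of an indecomposable $A^+$-module different from $P(\bar 3)$.'' As stated for $\mathcal C(\widetilde 4)$ this is false: there the maps $\beta,\beta'$ need not be monic, so one can build zigzag indecomposables whose support is too tall to fit in any shift of the $A^+$-slab. Concretely, take the left column zero, the right column the cyclic graded $k[x]/(x^4)$-module generated in level $4$ (support levels $1,\dots,4$), the middle column cyclic generated in level $1$ (support levels $-2,\dots,1$), and let $\beta$ send the generator of the middle column to the unique nonzero element of the right column in level $1$. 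Column-wise Krull--Remak--Schmidt together with $\beta\neq 0$ shows this object is indecomposable, and its support spans $7$ consecutive levels, whereas every shift of an $A^+$-module is supported in at most $6$. If instead you restrict your claim to $\mathcal S(\widetilde 4)$ --- where it is true --- it is precisely the covering half of the theorem being proved, so invoking it is circular. The missing idea, and the place where the dissertation does its real work, is a support bound: one must show that the monomorphism conditions on $\beta$, $\beta'$ force every indecomposable object of $\mathcal S(\widetilde 4)$ into a single $\mathbb Z$-translate of the six-level slab of $A^+$, and only then can the known structure of $A^+$-mod (tubes, inserted rays, the coray at $S(2'')$) be brought to bear. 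Nothing in your sketch supplies this bound; the ``tube-by-tube bookkeeping'' you defer to at the end is not where the difficulty sits.

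Two further steps are also shaky. In the disjointness argument you say that $D[n]\cong D'$ with $D,D'\in\mathcal D$ ``forces $n=0$ by comparison of dimension types''; but dimension types are invariant under $[1]$, since the covering map $\pi$ sums each row over all of $\mathbb Z$, so they cannot detect $n$. You need dimension vectors in $K_0(\mathcal C(\widetilde 4))$, and even then you must rule out, say, an object of some $\mathcal T_\gamma$ vanishing at the bottom level whose shift lands in another family $\mathcal T_{\gamma'}$ or in $\mathcal U$ --- again a statement about supports and slopes that requires proof. Finally, the claim that the shifts of the preprojective $A$-modules ``supply precisely the modules inserted on the three rays'' is unjustified and incorrect as stated: the ray modules arise by ray insertion at $\rad P(4'')$, $\rad P(6)$, $\rad P(5')$ inside $\mathcal T_\infty$, while what is actually needed is that each preprojective $A$-module in $\mathcal S(\widetilde 4)$ equals $D[-1]$ for some $D\in\mathcal D$ vanishing at the bottom level, where $D$ may be a ray module or an $A$-module in $\mathcal T_\infty\cap\mathcal S(\widetilde 4)$; establishing that correspondence is again part of the theorem, not an input to it.
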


We list here the dimension vectors on the modules on the three rays
starting at the three projective-injective modules $P(4'')$, $P(5')$, $P(6)$
since they are the only indecomposable objects in $\mathcal D$ which 
are not $A$-modules.

$$
\begin{smallmatrix} 
&&0 \\ &0&0\\ 1&1&1\\ 1&1&1\\ 1&1&1\\ 1&1&1
\end{smallmatrix} 
\to 
\begin{smallmatrix} 
&&0 \\ &0&0\\ 1&1&1\\ 1&1&1\\ 1&1&1\\ 0&0&0
\end{smallmatrix} 
\to 
\begin{smallmatrix} 
&&0 \\ &0&1\\ 1&1&2\\ 1&1&2\\ 1&1&1\\ 0&0&0
\end{smallmatrix} 
\to 
\begin{smallmatrix} 
&&0 \\ &0&1\\ 1&1&2\\ 1&2&2\\ 1&1&1\\ 0&0&0
\end{smallmatrix} 
\to 
\begin{smallmatrix} 
&&0 \\ &0&1\\ 1&2&3\\ 2&3&3\\ 2&2&2\\ 1&1&1
\end{smallmatrix} 
\to \cdots $$

$$
\begin{smallmatrix} 
&&0 \\ &1&1\\ 0&1&1\\ 0&1&1\\ 0&1&1\\ 0&0&0
\end{smallmatrix} 
\to 
\begin{smallmatrix} 
&&0 \\ &1&1\\ 0&1&1\\ 0&1&1\\ 1&1&1\\ 0&0&0
\end{smallmatrix} 
\to 
\begin{smallmatrix} 
&&0 \\ &1&1\\ 0&1&1\\ 1&2&2\\ 1&1&1\\ 0&0&0
\end{smallmatrix} 
\to 
\begin{smallmatrix} 
&&0 \\ &1&1\\ 0&1&2\\ 1&2&2\\ 1&1&1\\ 0&0&0
\end{smallmatrix} 
\to 
\begin{smallmatrix} 
&&0 \\ &1&2\\ 0&2&3\\ 1&3&3\\ 1&2&2\\ 0&0&0
\end{smallmatrix} 
\to \cdots $$

$$
\begin{smallmatrix} 
&&1 \\ &0&1\\ 0&0&1\\ 0&0&1\\ 0&0&0\\ 0&0&0
\end{smallmatrix} 
\to 
\begin{smallmatrix} 
&&1 \\ &0&1\\ 0&0&1\\ 0&1&1\\ 0&0&0\\ 0&0&0
\end{smallmatrix} 
\to 
\begin{smallmatrix} 
&&1 \\ &0&1\\ 0&1&2\\ 1&2&2\\ 1&1&1\\ 1&1&1
\end{smallmatrix} 
\to 
\begin{smallmatrix} 
&&1 \\ &0&1\\ 0&1&2\\ 1&2&2\\ 1&1&1\\ 0&0&0
\end{smallmatrix} 
\to 
\begin{smallmatrix} 
&&1 \\ &0&2\\ 0&1&3\\ 1&2&3\\ 1&1&1\\ 0&0&0
\end{smallmatrix} 
\to \cdots $$

Note that after every 4 steps, the dimension vector increases by
$\mathbf h[1]$.

\subsection{The category $\mathcal S(4)$}
%----------------------------------------
The categories $\mathcal S(\widetilde 4)$ and $\mathcal S(4)$ are related by an 
important functor, the the so-called covering functor
$$\mathcal S(\widetilde 4)\to \mathcal S(4)$$
It assigns to an object $M\in\mathcal S(\widetilde 4)$ which is given 
as a representation $M$ of $\widetilde Q$ the following quadruple 
$(V,T,U_1,U_2)\in \mathcal S(4)$:
$$V=\bigoplus_{i\in\mathbb Z}M_i,\quad  U_2=\bigoplus_{i\in\mathbb Z}M_i',
\quad U_1=\bigoplus_{i\in\mathbb Z}M_i''$$
The embedding $U_1\to U_2$ is given by the maps $M_\beta'$, the embedding 
$U_2\to V$ by the maps $M_\beta$.  The operation of $T$ on $V$ is given by the maps $\alpha$.

\begin{theorem}[{\cite[below Lemma 3.13]{diss}}]
The covering functor $\mathcal S(\widetilde 4)\to \mathcal S(4)$ is dense,
it induces a bijection between the $\mathbb Z$-orbits of isomorphism classes of 
indecomposable objects in $\mathcal S(\widetilde 4)$ and the isomorphism classes of 
indecomposable objects in $\mathcal S(4)$. 
\qed
\end{theorem}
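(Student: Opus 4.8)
\emph{The plan} is to recognize $F\colon\mathcal S(\widetilde 4)\to\mathcal S(4)$ as the push-down functor of the Galois covering $\widetilde Q\to Q$, whose deck group is the infinite cyclic group generated by the shift $[1]$; this group acts freely on the vertices of $\widetilde Q$ because $[1]$ has no fixed points. First I would record that $F$ is a well-defined functor: the relation $\alpha^4=0$ forces $T^4=0$, the commutativity relations make $U_1\subseteq U_2\subseteq V$ into $T$-invariant subspaces, and the hypothesis that $M_\beta,M_{\beta'}$ be monic is exactly what guarantees genuine inclusions, so objects go to objects and morphisms to morphisms, compatibly with direct sums. The engine of the whole argument is the Hom-formula
$$\Hom_{\mathcal S(4)}(FM,FN)\;\cong\;\bigoplus_{j\in\mathbb Z}\Hom_{\mathcal S(\widetilde 4)}(M,N[j]),$$
obtained by decomposing a map $\bigoplus_iM_i\to\bigoplus_iN_i$ into its graded components $M_i\to N_{i+j}$ and observing that commuting with $T$ and preserving the subspaces forces each degree-$j$ component to be a morphism $M\to N[j]$. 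Since $M$ and $N$ have finite, hence bounded, support in $\widetilde Q$, the sum is finite.

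From the Hom-formula the bijection statement is formal. Taking $M=N$ indecomposable gives $\End_{\mathcal S(4)}(FM)=\bigoplus_j\Hom(M,M[j])$, a $\mathbb Z$-graded algebra whose degree-zero part $\End_{\mathcal S(\widetilde 4)}(M)$ is local; since the support of $M$ is bounded and nonempty we have $M\not\cong M[j]$ for $j\neq 0$, so no homogeneous element of nonzero degree is invertible and the two-sided ideal $\mathfrak r=\rad\End(M)\oplus\bigoplus_{j\neq0}\Hom(M,M[j])$ has quotient the division ring $\End(M)/\rad\End(M)$. Hence $\End(FM)$ is local and $FM$ is indecomposable. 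For injectivity on orbits, suppose $FM\cong FN$ with $M,N$ indecomposable; writing the mutually inverse maps in graded components and reading off the degree-zero part of the identity, locality of $\End(M)$ forces some component $M\to N[j_0]$ to be a split monomorphism, hence, as $N[j_0]$ is indecomposable, an isomorphism. Thus $M$ and $N$ lie in one $[1]$-orbit, while conversely $F(N[j])\cong FN$ for all $j$; so $F$ induces an injection from $[1]$-orbits of indecomposables into isomorphism classes of indecomposables of $\mathcal S(4)$.

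It remains to prove \emph{density}, which I expect to be the main obstacle, and which, together with the fact that $F$ preserves direct sums, yields surjectivity onto the indecomposables. Here I would reduce the problem to showing that every object $(V,T,U_1,U_2)$ of $\mathcal S(4)$ is \emph{gradable}, that is, that $V$ admits a $\mathbb Z$-grading for which $T$ is homogeneous and $U_1,U_2$ are homogeneous subspaces; such a grading is exactly the datum of a lift $M\in\mathcal S(\widetilde 4)$ with $FM\cong(V,T,U_1,U_2)$. Two routes are available. The structural one is to verify that the covering $\widetilde Q\to Q$ is locally support-finite and then to invoke the general density theorem for push-down functors of Galois coverings with torsion-free group; this is the point where the tame, tubular nature of $A$ must be controlled, since for general tame coverings density can fail through modules of the second kind. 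The more hands-on route is to exploit that $\alpha^4=0$ kills every cyclic path in $Q$, so that there is no nonzero composite winding around the loops and hence no obstruction to propagating a consistent grading along the arrows starting from a minimal generating set of $(V,T)$; the nilpotency of $T$ and the absence of infinite-dimensional summands are precisely what rule out the periodic phenomena that obstruct density in the non-nilpotent case. Carrying out this gradability argument rigorously, and compatibly with the nested invariant subspaces $U_1\subseteq U_2$, is the step I would expect to demand the most care.
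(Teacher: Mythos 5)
Your formal half of the argument is correct and standard: the graded Hom-formula for the push-down of a Galois covering with free, torsion-free group $\mathbb Z$, the deduction that $\End(F M)$ is local (using $M\not\cong M[j]$ for $j\neq 0$ because supports are bounded and get shifted), and the injectivity on $[1]$-orbits are exactly the classical covering-theory steps going back to Gabriel. Note that the paper does not prove this theorem at all --- it quotes it from \cite{diss} --- so the only question is whether your proposal constitutes a proof. It does not, because the first assertion of the theorem, density, is precisely the part you leave unexecuted: you name two candidate routes and carry out neither, and this is where the entire mathematical content lies.

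The gap is genuine and not a matter of routine verification. Your ``hands-on'' route rests on the principle that nilpotency of all cycles removes the obstruction to grading a module; that principle is false. Take the local algebra $k\langle x,y\rangle/(x,y)^2$ with the $\mathbb Z$-grading $\deg x=1$, $\deg y=-1$: every path of length two is zero, yet the indecomposable two-dimensional module with $xe_1=e_2$, $ye_1=\lambda e_2$ ($\lambda\neq0$) admits no compatible grading, since a lift would need its support concentrated at two vertices $g$ and $h$ of the cover with simultaneously $h=g+1$ and $h=g-1$. So modules of the second kind can exist in the presence of nilpotency, and in tame situations (which $\mathcal S(4)$ is --- it has one-parameter tubular families) density of the push-down can genuinely fail. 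Your structural route, local support-finiteness plus the Dowbor--Skowro\'nski density theorem, is indeed the correct one and matches what the citation to \cite{diss} suggests; but local support-finiteness of the covering category is exactly where the work sits. It requires showing that every indecomposable of $\mathcal S(\widetilde 4)$ has support in a bounded strip of $\widetilde Q$, which is the content of the fundamental domain theorem (Proposition 3.12 of \cite{diss}, quoted in Section~\ref{section-s4}): every indecomposable is a shift of an $A$-module or of a module on one of the three inserted rays, hence supported on the finitely many vertices of $Q_{A^+}$. In other words, the one-parameter families of $\mathcal S(4)$ are gradable only because the cover itself carries tubular families $\mathcal T_\gamma$ that push down onto them, and establishing this is inseparable from the structure theory of $A$ and $A^+$. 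There is also a smaller point you should make explicit: the covering theorems are stated for module categories over locally bounded $k$-categories, whereas $\mathcal S(\widetilde 4)$ and $\mathcal S(4)$ are full subcategories cut out by monomorphism conditions; one must observe that the structure maps of $FM$ at $\beta,\beta'$ are direct sums of those of $M$, so density (and the Hom-formula) descends from $\mathcal C(\widetilde 4)\to\mathcal C(4)$ to the subspace categories. As it stands, your proposal proves the bijection-on-orbits statement conditional on density, but not the theorem.
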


Combining the two above theorems, the covering functor induces a bijection between 
the objects in $\mathcal D$ and the isomorphism classes of indecomposable objects in 
$\mathcal S(4)$.  In particular, the indecomposable objects in $\mathcal S(4)$ 
are either $A$-modules in one of the families of tubes 
$\big(\bigsqcup_{\gamma\in\mathbb Q^+}\mathcal T_\gamma\big)\sqcup\big(\mathcal T_\infty\cap
\mathcal S(\widetilde 4)\big)$
or lie in one of the three rays in $\mathcal U$ 
starting at a projective-injective object.

\section{The possible dimension types}
%---------------------------------------

\subsection{A symmetry result for dimension types}
%------------------------------------------------
\label{section-symmetry}
When considering the diagrams under Theorem~\ref{theorem-dimension-triples}, 
we observe that the set of dimension types, 
as a set of points in $\mathbb R^3$,
is symmetric with respect to reflection on the 
plane $x=z$.  The following lemma confirms this observation:

\begin{lemma}
For each $n$, 
the category $\mathcal S(n)$ has a self duality $R$ which induces an
involution on the set of dimension types given by 
$$\dim RM\;=\;(z,y,x)$$
if $M\in\mathcal S(n)$ has dimension type $\dim M=(x,y,z)$.
\end{lemma}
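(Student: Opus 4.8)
The plan is to construct the duality $R$ explicitly as a contravariant functor on $\mathcal S(n)$ and verify its effect on dimension types. Given a system $M=(V,T,U_1,U_2)$, I would set $RM=(V^*,T^*,U_2^{\perp},U_1^{\perp})$, where $V^*=\Hom(V,k)$ is the usual vector-space dual, $T^*$ is the adjoint (transpose) operator, and for a subspace $W\subseteq V$ the annihilator $W^{\perp}=\{\varphi\in V^*:\varphi|_W=0\}$. On morphisms, $R$ sends $f:M\to M'$ to the adjoint $f^*:V'^*\to V^*$. The key point is the order reversal of the subspaces: since $U_1\subset U_2$, we have $U_2^{\perp}\subset U_1^{\perp}$, so the chain is again an increasing pair of subspaces, now with the smaller term being $U_2^{\perp}$ and the larger $U_1^{\perp}$.

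The verification then proceeds in three routine steps. First I would check that $R$ lands in $\mathcal S(n)$: the adjoint $T^*$ is nilpotent of the same index as $T$ (its Jordan block sizes coincide), so $(T^*)^n=0$ holds, and one checks $T^*(U_i^{\perp})\subseteq U_i^{\perp}$ using that $T(U_i)\subseteq U_i$ together with the standard identity $(T^{-1}W)^{\perp}\supseteq\cdots$, or more directly that $\varphi\in U_i^{\perp}$ implies $T^*\varphi=\varphi\circ T$ vanishes on $U_i$ since $T(U_i)\subseteq U_i$. Second, $R$ is contravariant, $k$-linear, and additive, with $R^2\cong\mathrm{id}$ via the canonical isomorphism $V\cong V^{**}$ (under which $W^{\perp\perp}=W$), establishing that $R$ is a self-duality. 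Third, I would compute the dimension type of $RM$ from the standard dimension formula $\dim W^{\perp}=\dim V-\dim W$: we get $\dim U_2^{\perp}=\dim V-\dim U_2$, $\dim U_1^{\perp}/U_2^{\perp}=\dim U_2-\dim U_1$, and $\dim V^*/U_1^{\perp}=\dim U_1$, so that $\dim RM=(z,y,x)$ exactly as claimed.

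I would also confirm that $R$ is exact for the relevant exact structure, since the lemma is phrased in terms of the category's exact structure (the dimension type being additive on the three short exact sequences of ambient, small, and intermediate spaces). This follows because dualization is exact on vector spaces and annihilators convert the three short exact sequences of $V$, $U_1$, $U_2/U_1$, $V/U_2$ into the corresponding short exact sequences for $RM$; the order reversal $U_1\leftrightarrow$ top and $U_2\leftrightarrow$ bottom is precisely what swaps the roles of the simple objects $E_1=(k,0,k,k)$ and $E_3=(k,0,0,0)$ while fixing $E_2=(k,0,0,k)$, which is the conceptual reason the dimension type transforms by $(x,y,z)\mapsto(z,y,x)$.

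The only mildly delicate point, rather than a genuine obstacle, is bookkeeping the annihilator order reversal and checking that $T^*$ preserves each annihilator; everything else is the standard vector-space duality packaged to respect the operator and the subspace chain. Since the nilpotency index is preserved under transpose, $R$ restricts to each $\mathcal S(n)$ for every $n$, giving the self-duality uniformly as stated.
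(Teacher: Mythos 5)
Your proposal is correct and constructs essentially the same duality as the paper: the paper defines $R=DE$, where $E:(U_1\subset U_2\subset V)\mapsto(V\onto V/U_1\onto V/U_2)$ is the reflection equivalence cited from Simson and $D=\Hom_k(-,k)$ is vector-space duality, so that $RM=(D(V/U_2)\subset D(V/U_1)\subset D(V))$ --- canonically isomorphic to your annihilator quadruple $(V^*,T^*,U_2^{\perp},U_1^{\perp})$, with the identical dimension count giving $(z,y,x)$. The only difference is packaging: you verify nilpotency of $T^*$, invariance of the annihilators, functoriality, and $R^2\cong\mathrm{id}$ by hand, whereas the paper obtains these properties at once by composing the cited equivalence $E$ with the exact duality $D$.
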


\begin{proof}
Let $\mathcal F(n)$ be the category of all composable epimorphisms of 
$k[T]/(T^n)$-modules.  Then $\mathcal F(n)$ and $\mathcal S(n)$ are
both equivalent and dual:

\smallskip
The reflection duality
$$E:\quad (U_1\subset U_2\subset V)\;\mapsto\; (V\onto V/U_1\onto V/U_2)$$
studied in \cite[Section  5.2]{simson1} actually is an equivalence of 
categories.  Combining $E$ with the vector space duality $D=\Hom_k(-,k)$
yields a self duality $R=DE$ on $\mathcal S(n)$:
$$R:\quad (U_1\subset U_2\subset V)\;\mapsto\; (D(V/U_2)\subset D(V/U_1)\subset D(V))$$
If $x=\dim U_1$, $y=\dim U_2/U_1$, $z=\dim V/U_2$, then 
$\dim D(V/U_2)=z$, $\dim D(V/U_1)=y+z$, $\dim D(V)=x+y+z$, so 
$R(U_1\subset U_2\subset V)$ has dimension type $(z,y,x)$.
\end{proof}

\subsection{From dimension vectors to dimension types...}
%----------------------------------------------------------
It is the aim of this section to show the first part of
Theorem~\ref{theorem-dimension-triples}:

\begin{proposition}\label{proposition-dimension-triples}
Let $M:(V,T,U_1,U_2)$ be an indecomposable system in $\mathcal S(4)$.
Then the dimension type
$$\dim M=(\dim U_1, \dim U_2/U_1, \dim V/U_2)$$
is in the marked region in the hexagonal diagrams under 
Theorem~\ref{theorem-dimension-triples}.
\end{proposition}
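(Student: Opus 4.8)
The plan is to push all the geometric information about the indecomposables through the covering functor and then reduce everything to a finite computation plus the additive behaviour under the shift $[1]$. By the structure theorems recalled in Section~\ref{section-s4}, every indecomposable object in $\mathcal S(4)$ is, up to the action of $[1]$, the image under the covering functor of an object in the fundamental domain $\mathcal D = \big(\bigsqcup_{\gamma\in\mathbb Q^+}\mathcal T_\gamma\big)\sqcup\mathcal U$. The covering functor sends a representation $M$ of $\widetilde Q$ to the quadruple with $V=\bigoplus_i M_i$, $U_2=\bigoplus_i M_i'$, $U_1=\bigoplus_i M_i''$, so the dimension type $(x,y,z)$ of the image is read off additively from the dimension vector $\dim M\in K_0$: namely $x=\sum_i (\dim M)_{i''}$, $y=\sum_i\big((\dim M)_{i'}-(\dim M)_{i''}\big)$, and $z=\sum_i\big((\dim M)_i-(\dim M)_{i'}\big)$. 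Thus the whole proposition becomes a statement about which triples arise this way, which I can attack component by component.

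First I would handle the two families whose quasisimple modules have the fixed dimension types $\mathbf h$ and $\mathbf h[1]$. For the homogeneous tubes the quasisimple has dimension type $a\,\mathbf h[1]+b\,\mathbf h$ with $\gamma=\tfrac ab$, and a module of quasilength $\ell$ in such a tube has dimension vector $\ell$ times this; computing the associated triple $(x,y,z)$ for $\mathbf h$ and $\mathbf h[1]$ separately and taking nonnegative integer combinations gives a cone of attainable triples, on which one directly checks the coordinate-difference bounds $|x-y|,|y-z|,|z-x|\le 3$ (indeed the generators have small differences, and scaling by $\ell$ is absorbed into the periodicity under $(2,2,2)$, since $\mathbf h+\mathbf h[1]$ contributes exactly $(2,2,2)$ to the triple). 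The extended tubes of circumference $4,4,2$ in each $\mathcal T_\gamma$ contribute only finitely many quasisimple types, so after reducing modulo the shift I only need to verify the bound and the excluded residues $\{(1,3,1),(1,4,1),(3,1,3)\}+\mathbb N(2,2,2)$ for a finite list of mouth modules together with their combinations up to circumference; this is a bounded check.

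The nonstable family $\mathcal U$ requires separate care because it contains the three inserted rays starting at $P(4'')$, $P(5')$, $P(6)$, whose dimension vectors are tabulated explicitly in Section~\ref{section-three-rays}. Here I would read off the triple $(x,y,z)$ from each of the listed dimension vectors, confirm directly that they satisfy the coordinate-difference bounds and avoid the forbidden residues, and then invoke the stated fact that after every $4$ steps the dimension vector increases by $\mathbf h[1]$ — which adds $(1,1,1)$ to $(x,y,z)$ — so that the entire ray is governed by its first four members modulo the $(2,2,2)$-periodicity. Combining the contributions of all three families, every indecomposable lands in the union claimed, and the symmetry Lemma (reflection $x\leftrightarrow z$) lets me cut the casework roughly in half, since $\mathcal S(4)$ is self-dual under $R$.

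The main obstacle I anticipate is not any single hard idea but the bookkeeping of the finitely many exceptional mouth modules and ray modules: I must be sure that the exclusions are exactly $\{(1,3,1),(1,4,1),(3,1,3)\}+\mathbb N(2,2,2)$ and that none of the tube or ray modules accidentally realizes a forbidden residue. Concretely, the delicate point is verifying that no combination of quasisimples in the circumference-$4$ and circumference-$2$ extended tubes, nor any ray module, produces the triple $(3,1,3)$ or its shifts — establishing the \emph{holes} — while still covering every permitted neighbour. I would organize this as a table of the attainable triples modulo $(2,2,2)$ for each component and check membership against the two hexagonal diagrams directly.
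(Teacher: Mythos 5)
Your first two steps --- reducing via the covering functor to the fundamental domain $\mathcal D$, and checking the three inserted rays by computing the first four terms and invoking periodicity --- agree with the paper. The genuine gap is in your treatment of the $A$-modules, which constitute almost all of $\mathcal D$. You propose a component-by-component check of the Auslander--Reiten quiver and claim that the extended tubes of circumference $4,4,2$ contribute ``only finitely many quasisimple types,'' so that a ``bounded check'' suffices. This is false: the fundamental domain contains one tubular family $\mathcal T_\gamma$ for \emph{every} $\gamma\in\mathbb Q^+$, and the dimension vectors of the mouth modules of the non-homogeneous tubes vary with $\gamma=\frac ab$ and grow without bound as $a,b\to\infty$ (the sum of the quasisimples around a mouth is the radical vector $a\,\mathbf h[1]+b\,\mathbf h$ of slope $\gamma$). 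Reducing modulo the shift $[1]$ cannot help, since the shift is exactly what was already used to cut down to $\mathcal D$; inside $\mathcal D$ the infinitely many families remain. So your plan has no mechanism for handling all but finitely many indecomposables. The paper's proof supplies precisely the missing idea: because $A$ is tubular, the dimension vector $\mathbf d$ of any indecomposable $A$-module is either a radical vector of the quadratic form $\chi_A$ (and then $\pi(\mathbf d)$ is a multiple of $(2,2,2)$, which is allowed), or a root; using $\chi_A(\mathbf d+\mathbf u)=\chi_A(\mathbf d)$ for radical $\mathbf u$, any root is reduced to $\mathbf d'=\mathbf d-\mathbf d_{\bar 3}\mathbf h-\mathbf d_5\mathbf h[1]$, a root vanishing at the vertices $\bar 3$ and $5$, hence a root of the algebra $A'$ tilted of type $\mathbb E_7$. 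This converts ``all tubes for all $\gamma$'' into a single finite check of the $63$ positive (and the negative) roots of $\mathbb E_7$, uniformly in $\gamma$. Some argument of this kind is indispensable; without it your proof cannot be completed.

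Two further arithmetic errors, secondary but not harmless: $\pi(\mathbf h[1])=(2,2,2)$, not $(1,1,1)$, and likewise $\pi(\mathbf h)=(2,2,2)$, so $\mathbf h+\mathbf h[1]$ contributes $(4,4,4)$ to the triple, not $(2,2,2)$ as you assert for the homogeneous tubes. The first slip, taken literally, would break your own periodicity argument for the rays: they are governed by their first four members modulo $(2,2,2)$ precisely because adding $\mathbf h[1]$ to the dimension vector adds $(2,2,2)$ to the dimension type.
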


The covering functor $\pi:\mathcal S(\tilde 4)\to \mathcal S(4)$
induces a $\mathbb Z$-linear map on the Grothendieck groups
$$\pi:\;K_0(\mod \tilde Q)\to K_0(\mod Q),\; \textstyle
\bd\mapsto(\sum_i d_{i''}, \sum_i(d_{i'}-d_{i''}),\sum_i(d_{i'}-d_i)),
$$
where $\widetilde Q$, $Q$ are the quivers introduced in 
Section~\ref{section-quiver-representations}.
Clearly, for an object $X\in\mathcal S(\tilde 4)$, the formula
$\pi(\dim X)=\dim(\pi(X))$ holds.

\begin{proof}
In the proof, we identify for each dimension type $(x,y,z)$ 
in the marked region a corresponding
object $X\in\mathcal S(\widetilde 4)$ such that $\pi(\dim X)=(x,y,z)$.

\medskip
We may assume that $M=\pi(X)$ where $X$ is in the fundamental domain
$\mathcal D$ for $\mathcal S(\tilde 4)$.  So $X$ is either an $A$-module
or lies in one of the three inserted rays.

\medskip
We first deal with the modules on the inserted rays.  
By applying $\pi$ to the dimension vectors listed at the end 
of Section~\ref{section-three-rays}, 
the modules on the rays starting at $P(4'')$, $P(5')$, $P(6)$
give rise to the following dimension types.
\begin{eqnarray*}
& (400) \to (300)\to (303)\to (312)\to (622)\to \cdots\\
& (040) \to (130)\to (250)\to (251)\to (262)\to \cdots \\
& (004) \to (013)\to (323)\to (223)\to (226)\to \cdots
\end{eqnarray*}
As mentioned above, after every 4 steps the dimension vectors on the rays 
increase by $\bh[1]$.  Thus, after every 4 steps, the dimension type
increase by $\pi(\bh[1])=(2,2,2)$.
Thus, by checking the first four dimension types in each sequence,
we easily verify that each dimension type of a module 
on one of the rays is in the marked region in the hexagonal diagrams.

\medskip
The remaining indecomposable modules in $\mathcal D$ are all $A$-modules.
As the category of modules over the tubular algebra $A$ is controlled
by an integral quadratic form $\chi_A$, the dimension vector $\bd$ of an
indecomposable $A$-module is either a positive root 
(that is, $\chi_A(\bd)=1$) or a positive radical
vector for $\chi_A$ (that is, $\chi_A(\bd)=0$).  
It turns out that the radical of the tubular algebra $A$ 
is the subgroup of $K_0(A)$ 
generated by $\bh$ and $\bh[1]$ (see \cite[5.1 (1)]{ringel}).

\medskip
If the dimension vector $\bd$ of the $A$-module $X$ is in the radical,
say $\bd=a_0\bh+a_\infty\bh[1]$, then the dimension type for
$M=\pi(X)$ is $(a_0+a_\infty)\cdot(2,2,2)$.  

\medskip
In order to deal with all roots, we use the fact 
that for a radical vector $\mathbf u$
and a dimension vector $\bd$, the quadratic form satisfies
$$\chi_A(\bd+\mathbf u) = \chi_A(\bd).$$
This property allows us to reduce a root $\bd$ for $\chi_A$ to a root 
for $\chi_{A'}$ (see below) as follows.
Put $\bd'=\bd-\bd_{\bar3}\bh-\bd_5\bh[1]$.
Since $\bh$ and $\bh[1]$ are in the radical, we have
$$1\;=\;\chi(\bd)\;=\;\chi(\bd').$$
Since $\bh$ has entry 1 in position $\bar3$ and $\bh[1]$ has entry
1 in position 5, we have $\bd'_5=\bd'_{\bar3}=0$, so $\bd'$ is a root for the
quadratic form of the algebra $A'$ given by the following quiver with
relations (which is obtained from the quiver for $A$ by deleting the 
points $\bar 3$ and $5$, and the corresponding arrows and relations).

$$
\hbox{\beginpicture
\setcoordinatesystem units <0.5cm,0.5cm>
%==========================================
\put{} at 0 0
\put{} at 4 6
\put{$Q_{A'}:$} at -4 3
\multiput{$\circ$} at 0 2  0 4  2 6  4 0  4 2  4 4  4 6 /
\arr{2.4 6}{3.6 6}
\arr{0.4 4}{3.6 4}
\arr{0.4 2}{3.6 2}
\arr{0 3.6}{0 2.4}
\arr{4 5.6}{4 4.4}
\arr{4 3.6}{4 2.4}
\arr{4 1.6}{4 .4}

\put{$\ssize 3''$} at  -.5 4
\put{$\ssize 2''$} at  -.5 2
\put{$\ssize 4'$} at  1.5 6

\put{$\ssize 1$} at 4.5 0
\put{$\ssize 2$} at 4.5 2
\put{$\ssize 3$} at 4.5 4
\put{$\ssize 4$} at 4.5 6

\setdots<2pt>
\plot .6 3.7  3.4 2.3 /
\endpicture}
$$
The algebra is tilted of type $\mathbb E_7$; the roots for $\chi_{A'}$
are obtained from the roots of the Dynkin diagram $\mathbb E_7$ as follows.
Suppose $\left(\begin{smallmatrix}&&b''\\a&b&g&c&d&d'\\ \end{smallmatrix}\right)$
is a root for $\mathbb E_7$, then the corresponding root for 
$\chi_{A'}$ is $\left(\begin{smallmatrix}&d'&d\\c''&&c\\b''&&b\\&&a\end{smallmatrix}\right)$
where $c''=b''+c-g$, and gives hence rise to the dimension type
\begin{eqnarray*}(x,y,z)& = & (a+b''+c'',b-b''+c-c''+d',d-d') \\
         &= &(a+2b''+c-g,b-2b''+g+d',d-d')\end{eqnarray*}
In the following diagram, we take each of the 63 positive roots of 
$\mathbb E_7$, for example from \cite[Planche VI]{bourbaki}, 
and represent the corresponding dimension type in the
hexagonal diagram by putting the number $\min(x,y,z)$ at position $(x,y,z)$.
$$
\beginpicture\setcoordinatesystem units <4mm,4mm>
\put {} at -10 -8
\put {} at 10 10 
\arr{0 -8}{0 10}
\arr{8 4}{-10 -5}
\arr{-8 4}{10 -5}
\put{$x$} at 9 -5.3
\put{$y$} at .7 9.5
\put{$z$} at -9 -5.3
\setdots<2pt>
\plot 0 6  -6 3  -6 -3  0 -6  6 -3  6 3  0 6 /
\multiput{$\ssize\bullet$} at -2 -1  -2 1  -2 3  -2 5  0 -2  0 0  0 2  0 4  0 6
            2 -5  2 -3  2 -1  2 1  2 3  2 5  4 -6  4 -4  4 -2  4 0  4 2  4 4  4 6 
            6 -3  6 -1  6 1  6 3  8 -2  8 0  8 2 /
\multiput{$\ssize0\phantom{,1}$} at -2 -.5  0 6.5  0 4.5  0 -1.5  4 6.5  8 0.5 /
\multiput{$\ssize0,1$} at 0 2.5  2 -.5  2 1.5  2 3.5  2 5.5  4 -1.5  4 0.5  4 2.5  4 4.5
                           6 -2.5  6 -.5  6 1.5  6 3.5 /
\multiput{$\ssize\phantom{0,}1$} at 0 0.5  8 -1.5  8 2.5 /
\multiput{$\ssize\phantom{0,}-1$} at -2 5.5 2 -4.5 /
\multiput{$\ssize0,-1$} at -2 1.5  -2 3.5  2 -2.5  4 -3.5  /
\multiput{$\ssize-2\phantom{,0}$} at 4 -5.5 / 
\endpicture
$$
We note that all roots fall into the marked region in the hexagonal 
diagrams following Theorem~\ref{theorem-dimension-triples}.
The negative roots for $\mathbb E_7$ give rise to the negative 
dimension types; it is straightforward to verify that they, too, 
fall in the marked region in the hexagonal diagrams.
\end{proof}

\medskip
We observe that none of the integer multiples of $(2,2,2)$ is
realized as the dimension type of a root.

\begin{corollary}
Suppose the indecomposable object $X\in\mathcal S(\widetilde4)$ has
dimension vector $\bd$ and dimension type $\pi(\bd)$. 
Then $\bd$ is a linear combination of $\bh$ and $\bh[1]$ if and only
if $\pi(\bd)$ is an integer multiple of $(2,2,2)$. 
In particular, if $X$ is on one of the inserted rays, or if $\bd$
is a root, then $\pi(\bd)$ is not an integer multiple of $(2,2,2)$. 
\end{corollary}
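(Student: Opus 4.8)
The plan is to see this corollary as a repackaging of the case analysis already carried out in the proof of Proposition~\ref{proposition-dimension-triples}, combined with the trichotomy of indecomposables in the fundamental domain $\mathcal D$. The forward implication needs no new work: it was shown in that proof that if $\bd=a_0\bh+a_\infty\bh[1]$ lies in the radical, then $\pi(\bd)=(a_0+a_\infty)(2,2,2)$, which is visibly an integer multiple of $(2,2,2)$. So ``$\bd$ a linear combination of $\bh$ and $\bh[1]$'' immediately forces ``$\pi(\bd)$ an integer multiple of $(2,2,2)$''.

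For the converse I would argue by elimination. First I reduce to the case $X\in\mathcal D$: since the map $\pi$ sums the local dimensions over all positions $i$, it takes the same value on a representation and on all of its shifts $X[m]$, so nothing is lost in assuming $X$ lies in the fundamental domain. By the description of $\mathcal D$, such an indecomposable $X$ is of exactly one of three kinds: a module on one of the three inserted rays; an $A$-module whose dimension vector $\bd$ is a root of $\chi_A$ (so $\chi_A(\bd)=1$); or an $A$-module whose $\bd$ is a radical vector (so $\chi_A(\bd)=0$). By \cite[5.1 (1)]{ringel} the radical of $\chi_A$ is precisely the subgroup generated by $\bh$ and $\bh[1]$, so for the third kind ``radical vector'' and ``integer combination of $\bh$ and $\bh[1]$'' are synonymous. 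Hence it suffices to show that, once $\pi(\bd)$ is assumed to be a multiple of $(2,2,2)$, neither the ray case nor the root case can occur; the surviving possibility is then exactly the radical case, which is what the converse asserts.

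Both exclusions are short inspections. The multiples of $(2,2,2)$ form the subgroup $\mathbb Z(2,2,2)$ of $\mathbb Z^3$, every element of which has all three coordinates equal; in particular any triple whose coordinates are not all equal fails to be such a multiple, and the property of lying in $\mathbb Z(2,2,2)$ is constant on cosets modulo $(2,2,2)$. On each inserted ray the dimension type is periodic modulo $(2,2,2)$ with period four, so I need only test the four base types listed in the proof on each ray --- for example $(4,0,0),(3,0,0),(3,0,3),(3,1,2)$ on the first --- and note that in none of the twelve triples do all three coordinates agree. Hence no ray module maps to a multiple of $(2,2,2)$. For the root case the required statement is exactly the observation recorded immediately before the corollary, obtained from the enumeration of the positive (and, by symmetry, negative) roots of $\mathbb E_7$ in the proof: no root maps under $\pi$ to an integer multiple of $(2,2,2)$. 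The ``in particular'' clause is then just the contrapositive reading of these two exclusions.

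Since the genuine computation --- reducing roots of $\chi_A$ to roots of the $\mathbb E_7$-algebra $A'$ and enumerating them --- has already been completed, the remaining work is only bookkeeping, and the main (mild) obstacle is to be sure that the trichotomy root/radical/ray is genuinely exhaustive for $\mathcal D$. That exhaustiveness follows from the tubular structure theorem for $A$ together with the explicit ray description in Section~\ref{section-three-rays}, and the two auxiliary facts used --- that the radical of $\chi_A$ equals $\langle\bh,\bh[1]\rangle$ and that $\pi$ is constant on shift orbits --- are each immediate, so no serious difficulty remains.
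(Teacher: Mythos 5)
Your proposal is correct and takes essentially the same route as the paper's own proof: reduce to the fundamental domain $\mathcal D$, invoke the ray/root/radical trichotomy, obtain the forward implication from linearity of $\pi$ on $\langle \bh,\bh[1]\rangle$, and establish the converse by elimination using the two exclusions (ray dimension types and images of roots are never multiples of $(2,2,2)$). The details you add --- periodicity of the ray types modulo $(2,2,2)$, the identification of the radical of $\chi_A$ with $\langle\bh,\bh[1]\rangle$, and the invariance of $\pi$ under the shift --- are precisely the facts the paper uses implicitly, so nothing differs in substance.
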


\begin{proof}
We may assume that $X$ is in the fundamental domain $\mathcal D$, 
so either $X$ is an $A$-module or $X$ is on one of the three inserted
rays.

\smallskip
We have seen that for the modules in the inserted rays,
none of the dimension vectors is a linear combination of 
$\bh$ and $\bh[1]$, and the corresponding dimension types
are not multiples of $(2,2,2)$. 

\smallskip
It remains to deal with $A$-modules. 
Obviously, the radical generators
$\bh$ and $\bh[1]$ for $\chi_A$ and all their linear combinations 
are mapped to multiples of $(2,2,2)$.
Conversely, we have seen 
in the proof of Proposition~\ref{proposition-dimension-triples}
that none of the roots is mapped to a multiple of $(2,2,2)$. 
\end{proof}

\subsection{...and back to dimension vectors}
%--------------------------------------------

We show that each triple $(x,y,z)$ which satisfies the conditions 
in Theorem~\ref{theorem-dimension-triples} can be realized as the dimension 
type of an indecomposable object in $\mathcal S(4)$.

\smallskip
First note that dimension types of the form
$(4,0,0)+\mathbb N(2,2,2)$, $(0,4,0)+\mathbb N(2,2,2)$,
$(0,0,4)+\mathbb N(2,2,2)$ and $(3,0,3)+\mathbb N(2,2,2)$
occur on the three inserted rays (the corresponding objects in 
$\mathcal S(4)$ are pictured in \cite[(3.5)]{diss}).

\bigskip
For the remaining triples, it suffices to assume that they have the form 
$(x,y,z)$ where $x\leq z$.  Using reflection duality 
(Section~\ref{section-symmetry}) 
we can obtain the corresponding triples with $x\geq z$.

\medskip
We realize those triples as modules over algebras
$A^{(12)}$ or $A^{(3)}$ which are tame concealed or tame domestic,
respectively.

\smallskip
Consider the 
path algebras given by the following two quivers with relations:
$$
\hbox{\beginpicture
\setcoordinatesystem units <0.5cm,0.5cm>
%==========================================
\put{} at 0 0
\put{} at 4 6
\put{$Q_{A^{(1)}}:$} at -4 3
\multiput{$\circ$} at 0 0  2 2  2 4  4 0  4 2  4 4  4 6 /
\arr{2.4 4}{3.6 4}
\arr{2.4 2}{3.6 2}
\arr{0.4 0}{3.6 0}
\arr{2 3.6}{2 2.4}
\arr{4 5.6}{4 4.4}
\arr{4 3.6}{4 2.4}
\arr{4 1.6}{4 .4}

\put{$\ssize 1''$} at  -.5 0
\put{$\ssize 2'$} at  1.5 2
\put{$\ssize 3'$} at  1.5 4

\put{$\ssize 1$} at 4.5 0
\put{$\ssize 2$} at 4.5 2
\put{$\ssize 3$} at 4.5 4
\put{$\ssize 4$} at 4.5 6

\setdots<2pt>
\plot 2.3 3.7  3.7 2.3 /
\endpicture}
\qquad\qquad
%
%  A^(2)
%
\hbox{\beginpicture
\setcoordinatesystem units <0.5cm,0.5cm>
%==========================================
\put{} at 0 0
\put{} at 4 6
\put{$Q_{A^{(2)}}:$} at -4 3
\multiput{$\circ$} at 0 0  0 2  2 2  4 0  4 2  4 4  4 6 /
\arr{0.4 2}{1.6 2}
\arr{2.4 2}{3.6 2}
\arr{0.4 0}{3.6 0}
\arr{0 1.6}{0 0.4}
\arr{4 5.6}{4 4.4}
\arr{4 3.6}{4 2.4}
\arr{4 1.6}{4 .4}

\put{$\ssize 1''$} at  -.5 0
\put{$\ssize 2''$} at  -.5 2
\put{$\ssize 2'$} at  2 2.5

\put{$\ssize 1$} at 4.5 0
\put{$\ssize 2$} at 4.5 2
\put{$\ssize 3$} at 4.5 4
\put{$\ssize 4$} at 4.5 6

\setdots<2pt>
\plot .6 1.7  3.4 .3 /
\endpicture}
$$

Both algebras have finite representation type.  A representations $X$ for
which the horizontal maps are monomorphisms can be considered as
an object $V$ in $\mathcal S(\widetilde 4)$, note that we put 
$V_1'=V_1=X_1$.

\smallskip
For each such indecomposable representation $X$ we indicate the 
corresponding dimension type $(x,y,z)=\dim(\pi(V))$ in the
hexagonal diagram below by putting the number $\min(x,y,z)$ at 
position $(x,y,z)$. 
$$
\beginpicture\setcoordinatesystem units <2.8mm,2.8mm>
\put {} at -10 -10
\put {} at 10 10 
\put{dimension types for $A^{(1)}$} at 0 -10
\arr{0 -8}{0 10}
\arr{8 4}{-10 -5}
\arr{-8 4}{10 -5}
\put{$x$} at 9 -5.3
\put{$y$} at .7 9.5
\put{$z$} at -9 -5.3
\setdots<2pt>
\plot 0 6  -6 3  -6 -3  0 -6  6 -3  6 3  0 6 /
\multiput{$\ssize\bullet$} at -8 -2  -8 0  -8 2
                              -6 -3  -6 -1  -6 1  -6 3
                              -4 -4  -4 -2  -4 0  -4 2  -4 4  -4 6
                              -2 -3  -2 -1  -2 1  -2 3  -2 5
                              0 -2  0 0  0 2  0 4  0 6
                              2 -3  2 -1  2 1  2 3  2 5
                              4 6 /
\multiput{$\ssize0\phantom{,1}$} at -8 0.5  -4 -3.5  -4 6.5  -2 -2.5  0 -1.5  0 4.5  
                                  0 6.5  2 -.5  2 1.5 /
\multiput{$\ssize0,1$} at -6 -2.5  -6 -.5  -6 1.5  -6 3.5  -4 -1.5  -4 .5  -4 2.5  -4 4.5
                        -2 -.5  -2 1.5  -2 3.5  -2 5.5  0 2.5 /
\multiput{$\ssize\phantom{0,}1$} at -8 -1.5  -8 2.5  0 .5  /
\multiput{$\ssize\phantom{0,}-1$} at 2 -2.5  2 5.5 /
\multiput{$\ssize0,-1$} at 2 3.5 /
\multiput{$\ssize-2\phantom{,0}$} at 4 6.5 / 
\endpicture
\quad
%
%  same for A^(2)
%
\beginpicture\setcoordinatesystem units <2.8mm,2.8mm>
\put {} at -10 -10
\put {} at 10 10 
\put{dimension types for $A^{(2)}$} at 0 -10
\arr{0 -8}{0 10}
\arr{8 4}{-10 -5}
\arr{-8 4}{10 -5}
\put{$x$} at 9 -5.3
\put{$y$} at .7 9.5
\put{$z$} at -9 -5.3
\setdots<2pt>
\plot 0 6  -6 3  -6 -3  0 -6  6 -3  6 3  0 6 /
\multiput{$\ssize\bullet$} at -8 -2  -8 0  
                              -6 -3  -6 -1  -6 1 
                              -4 -6  -4 -4  -4 -2  -4 0  -4 2  
                              -2 -5  -2 -3  -2 -1  -2 1  -2 3 
                              0 -6  0 -4  0 -2  0 0  0 2  0 4
                              2 -5  2 -3  2 -1  2 1  2 3 
                              4 -6  4 -4  4 -2  
                              6 -3 /
\multiput{$\ssize0\phantom{,1}$} at -8 0.5  -4 -5.5  -4 2.5  -2 3.5  0 2.5  0 4.5  
                                  2 1.5  4 -1.5 /
\multiput{$\ssize0,1$} at -6 -2.5  -6 -.5  -6 1.5  -4 -3.5  -4 .5  
                        -2 -4.5  -2 -2.5  -2 -.5  -2 1.5  0 -3.5  2 -.5 /
\multiput{$\ssize\phantom{0,}1$} at -8 -1.5  0 .5  /
\multiput{$\ssize\phantom{0,}-1$} at 0 -5.5  2 -4.5  2 3.5  4 -3.5  6 -2.5 /
\multiput{$\ssize0,-1$} at 2 -2.5 /
\multiput{$\ssize-2\phantom{,0}$} at 4 -5.5 / 
\multiput{$\ssize-1,0,1$} at 0 -1.5 /
\multiput{$\ssize 0,1,2$} at -4 -1.5 /
\endpicture
$$

We verify that each triple $(x,y,z)$ in 
Theorem~\ref{theorem-dimension-triples} with $x\leq z$ and $\min(x,y,z)\in\{0,1\}$
and different from $(4,0,0)$, $(0,4,0)$, $(0,0,4)$, $(3,0,3)$
can  be realized as the dimension type of an embedding $\pi(X)$ 
where $X$ is an indecomposable 
module over either $A^{(1)}$ or $A^{(2)}$ --- with one additional exception:  
The triple $(4,1,4)$ 
cannot be realized in this way and we will need to deal with it later.

\smallskip
The algebra $A^{(12)}$ is given by the quiver with relations,
$$
\hbox{\beginpicture
\setcoordinatesystem units <0.5cm,0.5cm>
%==========================================
\put{} at 0 0
\put{} at 4 6
\put{$Q_{A^{(12)}}:$} at -4 3
\multiput{$\circ$} at 0 0  0 2  2 2  2 4  4 0  4 2  4 4  4 6 /
\arr{2.4 4}{3.6 4}
\arr{0.4 2}{1.6 2}
\arr{2.4 2}{3.6 2}
\arr{0.4 0}{3.6 0}
\arr{0 1.6}{0 0.4}
\arr{2 3.6}{2 2.4}
\arr{4 5.6}{4 4.4}
\arr{4 3.6}{4 2.4}
\arr{4 1.6}{4 .4}

\put{$\ssize 1''$} at  -.5 0
\put{$\ssize 2''$} at  -.5 2
\put{$\ssize 2'$} at  1.5 2.5
\put{$\ssize 3'$} at 2 4.5

\put{$\ssize 1$} at 4.5 0
\put{$\ssize 2$} at 4.5 2
\put{$\ssize 3$} at 4.5 4
\put{$\ssize 4$} at 4.5 6

\setdots<2pt>
\plot .6 1.7  3.4 .3 /
\plot 2.3 3.7  3.7 2.3 /
\endpicture}
$$
it is a tame concealed algebra of type ${\widetilde{\mathbb E}}_7$, 
its radical vector is $\bh$.
Thus, if a homogeneous module $H$ has dimension vector $s\cdot \bh$,
and corresponds to an object $V\in\mathcal S(\widetilde 4)$,
then $\pi(V)$ has dimension type  $s\cdot(2,2,2)$. 

\smallskip
Let $(x,y,z)$ be a triple satisfying the condition in the theorem and such 
that $x\leq z$.  We assume that $(x,y,z)$ is not one of $(4,0,0)$, $(0,4,0)$,
$(0,0,4)$, $(3,0,3)$, $(4,1,4)$, up to a multiple of $(2,2,2)$.
Then $(x,y,z)=(x',y',z')+s\cdot(2,2,2)$ where $s$ is a non-negative number and
where the triple $(x',y',z')$ satisfies $\min(x',y',z')\in\{0,1\}$ and occurs in one of the
hexagonal diagrams above. Let $H$ be a homogeneous $A^{(12)}$-module of dimension
vector $s\cdot \bh$, and let $X$ be an indecomposable $A^{(1)}$- 
or $A^{(2)}$-module 
such that $\pi(X)$ is an embedding of  dimension type $(x',y',z')$. 
According to \cite[Lemma (3.1.1)]{rs}, there exists an extension $N$ of $X$ by $H$ or
of $H$ by $X$.  Then $N$ has the property that $\pi(N)$ is an embedding with 
$$\dim\pi(N)=(x',y',z')+s\cdot(2,2,2)=(x,y,z).$$

\bigskip
It remains to deal with the dimension types of the form $(4,1,4)+\mathbb Z(2,2,2)$.
Consider the algebra $A^{(3)}$:
$$
\hbox{\beginpicture
\setcoordinatesystem units <0.5cm,0.5cm>
%==========================================
\put{} at 0 -2
\put{} at 4 6
\put{$Q_{A^{(3)}}:$} at -4 2
\multiput{$\circ$} at  0 2  2 2  2 4  4 -2  4 0  4 2  4 4  4 6 /
\arr{2.4 4}{3.6 4}
\arr{0.4 2}{1.6 2}
\arr{2.4 2}{3.6 2}
\arr{2 3.6}{2 2.4}
\arr{4 5.6}{4 4.4}
\arr{4 3.6}{4 2.4}
\arr{4 1.6}{4 .4}
\arr{4 -.4}{4 -1.6}

\put{$\ssize 2''$} at  -.5 2
\put{$\ssize 2'$} at  1.5 2.5
\put{$\ssize 3'$} at 2 4.5

\put{$\ssize 0$} at 4 -2.5
\put{$\ssize 1$} at 4.5 0
\put{$\ssize 2$} at 4.5 2
\put{$\ssize 3$} at 4.5 4
\put{$\ssize 4$} at 4 6.5

\setdots<2pt>
\plot 2.3 3.7  3.7 2.3 /
\plot 4.6 6  4.8 6  5 5.9  5 -1.9  4.8 -2  4.6 -2 /
\endpicture}
$$
This algebra is tame domestic with radical generated by $\bh[1]$;
we verify that there is an indecomposable $A^{(3)}$-module $M$ of dimension vector $\bd$;
this module is preprojective.
$$
\bd=\begin{smallmatrix} &&1\\ &0&2\\ 1&2&3\\ &&2\\ &&1\end{smallmatrix}
$$
It turns out that the indecomposable $A^{(3)}$-modules of dimension vectors
$\be_{2''}$, $\be_{2'}$, and $\be_{2'}+\be_{3'}$ are regular or preinjective,
and hence $\pi(M)$ is an embedding. Let $c$ be the Coxeter transformation.
One verifies that $c^4(\bd)=\bd+\bh^3$, so for each natural number $s$
there is an indecomposable preprojective module $M_s$ of dimension vector 
$\bd+s\bh[1]$.  It follows that $\pi(M_s)$ is an embedding of dimension type
$(4,1,4)+s\cdot(2,2,2)$.

\smallskip
This finishes the proof of Theorem~\ref{theorem-dimension-triples}.

\section{The hexagonal pictures in the finite cases}\label{section-finite}
%===================================================

For $n<4$, each of the categories $\mathcal S(n)$ is of finite type.  
All the indecomposable objects have been determined explicitely in \cite[Section 3.1]{diss}.
Note that the dimension types there are given in the form $(\dim U_1,\dim U_2, \dim V)=(x,x+y,x+y+z)$.

\subsection{$n=1$}
%-----------------

In this case, the only indecomposables are the projective objects with
dimension types $(0,0,1)$, $(0,1,0)$, $(1,0,0)$.
$$
\beginpicture
  \setcoordinatesystem units <.3cm,.3cm>
  \put{} at -3 -2
  \put{} at 4 2
  \arr{0 -3}{0 4}
  \arr{3 1.5}{-4 -2}
  \arr{-3 1.5}{4 -2}
  \put{$x$} at 4.5 -1.8
  \put{$y$} at .5 4
  \put{$z$} at -4.5 -1.8
  \multiput{$\bullet$} at 0 2  -2 -1  2 -1 /
\endpicture
$$

\subsection{$n=2$}
%-----------------

The dimension types of the 9 indecomposable objects in $\mathcal S(2)$ in three space and in the 
hexagonal picture.

$$
\beginpicture
  \setcoordinatesystem units <.3cm, .3cm>
  \put{} at -4 -6 
  \put{} at 9 9
  \arr{0 0}{-3 -6}
  \arr{0 0}{9 -3}
  \arr{0 0}{0 9}
  \put{$x$} at 9.8 -2.8
  \put{$y$} at .8 9
  \put{$z$} at -3.8 -5.8
  \multiput{$\bullet$} at -2 -4  6 -2  0 6 
    0 3  3 2  3 -1  2 -3  -1 -2  -1 1  /
  \setdots<2pt>
  \plot 0 3  3 2  3 -1  2 -3  -1 -2  -1 1  0 3 /
  \plot -1 1  2 0  3 2 /
  \plot 2 0  2 -3 /
  \plot -1 -2  0 0  0 3 /
  \plot 0 0  3 -1 /
\endpicture
\qquad\qquad
\beginpicture
  \setcoordinatesystem units <.4cm,.4cm>
  \put{} at -6 -5
  \put{} at 6 6
  \arr{0 -5}{0 6}
  \arr{5 2.5}{-6 -3}
  \arr{-5 2.5}{6 -3}
  \put{$x$} at 6.5 -2.8
  \put{$y$} at .5 6
  \put{$z$} at -6.5 -2.8
  \multiput{$\bullet$} at 0 4  -4 -2  4 -2
    0 2  2 1  2 -1  0 -2  -2 -1  -2 1 /
  \setdots <2pt>
  \plot -2 1      0 2  2 1  2 -1  0 -2  -2 -1  -2 1 /
\endpicture
$$

\subsection{$n=3$}
%-----------------

The dimension types of the 27 indecomposable objects in $\mathcal S(3)$ in three space.
Note that the points $(2,1,2)$ and $(1, 2, 1)$ are missing.
$$
\beginpicture
  \setcoordinatesystem units <.4cm, .3cm>
  \put{} at -4 -8
  \put{} at 12 12
  \arr{0 0 }{12 -4}
  \arr{0 0}{0 12}
  \arr{0 0 }{-4 -8}
  \put{$x$} at 12.5 -3.5
  \put{$y$} at .8 12
  \put{$z$} at -4.5 -7.5
  \multiput{$\bullet$} at 0 9  9 -3  -3 -6 
    0 6  3 5  6 4  -1 4  0 3  -2 2  3 2  5 2  -1 1  1 1  6 1 
    2 0  4 0  -2 -1  3 -1  5 -1  -1 -2  1 -2  6 -2  2 -3  -2 -4  5 -4  1 -5  4 -6 /
\setdots <2pt>
  \plot 0 6  6 4 /
  \plot -1 4  5 2 /
  \plot -2 2  4 0 /
  \plot -2 -1 4 -3 /
  \plot -2 -4  4 -6 /
  \plot 0 6 -2 2 /
  \plot 3 5  1 1 /
  \plot 6 4  4 0 /
  \plot 6 1  4 -3 /
  \plot 6 -2  4 -6 /
  \plot -2 2  -2 -4 /
  \plot 1 1  1 -5 /
  \plot 4 0  4 -6 /
  \plot 5 2  5 -4 /
  \plot 6 4  6 -2 /
  \plot 0 3  -2 -1 /
  \plot 0 0  -2 -4 /
  \plot 3 2  1 -2 /
  \plot 3 -1  1 -5 /
  \plot 0 0  0 6 /
  \plot -1 -2  -1 4 /
  \plot 3 -1  3 5 /
  \plot 2 -3  2 3 /
  \plot 0 0  6 -2 /
  \plot -1 -2  5 -4 /
  \plot 0 3  6 1 /
  \plot -1 1  5 -1 /
\endpicture
$$

After rotation, we obtain the hexagonal picture.  Note the symmetry with respect to the plane 
$x=z$ given by the functor $R$ in Section~\ref{section-symmetry}.

$$
\beginpicture
  \setcoordinatesystem units <.4cm,.4cm>
  \put{} at -8 -7
  \put{} at 8 8
  \arr{0 -7}{0 8}
  \arr{7 3.5}{-8 -4}
  \arr{-7 3.5}{8 -4}
  \put{$x$} at 8.5 -3.8
  \put{$y$} at .5 8
  \put{$z$} at -8.5 -3.8
%  \put{$\bigcirc$} at 0 0 
  \multiput{$\bullet$} at 0 6  -6 -3  6 -3
    0 4  2 3  4 2  4 0  4 -2  2 -3  0 -4  -2 -3  -4 -2  -4 0  -4 2  -2 3
    0 2  2 1  2 -1  0 -2  -2 -1  -2 1  
    2.3 1.3  2.3 -.7  -1.7 -.7  -1.7 1.3  .3 .3  .6 .6 /
  \setdots <2pt>
  \plot 0 4  4 2  4 -2  0 -4  -4 -2  -4 2  0 4 /
\endpicture
$$

\bigskip

\end{document}